\documentclass[11pt]{amsart}

\usepackage{epsfig,amsmath,amsfonts,latexsym}
\usepackage[abs]{overpic}
\usepackage{color}
 \usepackage{palatino}
 \usepackage{hyperref}

\textwidth 6.4in \textheight 8.2in \evensidemargin 0in
\oddsidemargin 0in 
\topmargin .3in

\newtheorem{question}{Question}
\newtheorem{theorem}{Theorem}
\newtheorem{proposition}[theorem]{Proposition}
\newtheorem{lemma}[theorem]{Lemma}
\newtheorem{corollary}[theorem]{Corollary}

\theoremstyle{definition}

\theoremstyle{remark}
\newtheorem{remark}[theorem]{Remark}

\def\dfn#1{{\em #1}}
\def\R{\mathbb{R}}
\def\Z{\mathbb{Z}}
\def\C{\mathbb{C}}
\def\N{\mathbb{N}}

\numberwithin{theorem}{section}
\theoremstyle{plain}

\begin{document}

\title[Fillings and contact surgery]{Symplectic fillings, contact surgeries, \\and Lagrangian disks}

\author{James Conway}

\author{John B. Etnyre}

\author{B\"{u}lent Tosun}

\address{Department of Mathematics \\ University of California, Berkeley \\ Berkeley \\ California}

\email{conway@berkeley.edu}

\address{School of Mathematics \\ Georgia Institute
of Technology \\  Atlanta  \\ Georgia}

\email{etnyre@math.gatech.edu}

\address{Department of Mathematics\\ University of Alabama\\Tuscaloosa\\Alabama}

\email{btosun@ua.edu}

\subjclass[2000]{57R17}

\begin{abstract}
This paper completely answers the question of when contact $(r)$--surgery on a Legendrian knot in the standard contact structure on $S^3$ yields a symplectically fillable contact manifold for $r\in(0,1]$. We also give obstructions for other positive $r$ and investigate Lagrangian fillings of Legendrian knots. 
\end{abstract}

\maketitle

\section{Introduction}

An interesting and much studied question asks what properties are preserved under various types of contact surgeries. This has been extensively studied for contact $(-1)$--surgeries, that is, Legendrian surgery. For example it is known that any type of symplectic fillability is preserved \cite{EtnyreHonda02a, Weinstein91}, as is non-vanishing of the Heegaard Floer invariant \cite{OzsvathSzabo05a}, and tightness \cite{Wand15}; on the other hand universal tightness is not preserved \cite{Gompf98}.  Less is known about positive contact surgeries, though there are some results about the non-vanishing of the Heegaard Floer contact invariant \cite{Golla15, MarkTosunPre}. In this paper we completely answer the question of when fillability is preserved under $(r)$--contact surgeries on knots in the standard contact structure on $S^3$, for $r\in(0,1]$.  We also give many examples and constructions of Legendrian knots on which contact $(+1)$--surgery yields a symplectically fillable contact structure. We also discuss obstructions to certain contact surgeries being symplectically fillable. 

There are several results showing that contact $(+1)$--surgery on certain knots results in a symplectically non-fillable contact structure \cite{KalotiTosun??, LiscaStipsicz04, OwensStrle12}. At the 2017 International Georgia Topology conference, Ko Honda asked if there were any Legendrian knots in $S^3$ on which contact $(+1)$--surgery produces a fillable contact structure. The only obvious such knot, and only such knot many experts at the conference were able to come up with, was the maximal Thurston--Bennequin unknot. Further investigation yields the following complete answer.  

\begin{theorem}\label{thm1}
Let $L$ be a Legendrian knot in $(S^3,\xi_{std})$. For $r\in (0,1]$, contact $(r)$--surgery on $L$ is strongly symplectically fillable if and only if $r=1$ and $L$ bounds a Lagrangian disk in $(B^4,\omega_{std})$. 

Any such minimal filling will be an exact symplectic filling and have the homology of $S^1\times D^3$. Moreover, the filling can be taken to be a Stein filling if and only if $L$ bounds a regular Lagrangian disk in $(B^4,\omega_{std})$. In particular, if $L$ bounds a decomposable Lagrangian disk in $(B^4,\omega_{std})$, then the filling can be taken to be Stein. 
\end{theorem}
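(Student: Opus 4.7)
The theorem has two logical parts: a biconditional for $r=1$ together with a non-existence statement for $r\in(0,1)$, plus refinements about exactness, homology, and the Stein case. My plan is to treat $r=1$ via a symplectic cap-off/handle-cancellation argument and to reduce $r\in(0,1)$ to this via the Ding--Geiges algorithm, using the adjunction constraint that a Legendrian bounding a Lagrangian disk must have Thurston--Bennequin invariant $-1$.

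For sufficiency when $r=1$: assume $L$ bounds a Lagrangian disk $D\subset(B^4,\omega_{std})$. By the Weinstein Lagrangian neighborhood theorem, a neighborhood of $D$ is symplectomorphic to a neighborhood of the zero-section in $T^*D^2\cong D^2\times D^2$. Excising this neighborhood and regluing a standard symplectic model (a ``Lagrangian disk swap'') produces a symplectic manifold whose boundary is contact $(+1)$-surgery on $L$. If $D$ is regular, the Liouville field can be arranged tangent to $D$, and the construction upgrades to a Weinstein, hence Stein, filling; this yields the ``decomposable $\Rightarrow$ Stein'' clause, since a decomposable Lagrangian disk can by construction be arranged to be regular.

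For necessity when $r=1$: let $(W,\omega)$ be a strong filling of the contact $(+1)$-surgery $(Y,\xi)$ on $L$. Attach a Weinstein $2$-handle to $W$ along the Legendrian dual $L'\subset Y$; since contact $(-1)$-surgery on $L'$ inverts the $(+1)$-surgery on $L$, the enlarged manifold $W'$ has boundary $(S^3,\xi_{std})$. By the Gromov--Eliashberg--McDuff uniqueness of fillings of the standard $3$-sphere, $W'$ is diffeomorphic to $B^4$. The cocore of the attached Weinstein handle is then a Lagrangian disk in $W'\cong B^4$, and a handle-cancellation computation identifies its boundary in $S^3$ with $L$. A Mayer--Vietoris calculation on $W'=W\cup h$ yields $H_*(W)\cong H_*(S^1\times D^3)$, and exactness of the minimal filling follows from the fact that attaching a subcritical/critical Weinstein handle preserves the Liouville structure. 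The identical argument in the Stein category produces a regular Lagrangian disk from a Stein filling, giving the Stein biconditional.

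For $r\in(0,1)$, use the Ding--Geiges--Stipsicz algorithm to decompose contact $(r)$-surgery into a sequence of contact $(\pm 1)$-surgeries; for $0<r<1$ this decomposition must involve at least one contact $(+1)$-surgery on a stabilization of $L$, since the topological surgery coefficient is strictly less than $tb(L)+1$. If the surgery were fillable, the $(-1)$-handles can be absorbed into the filling via Weinstein handle attachment, reducing to a filling of a $(+1)$-surgery on a stabilized Legendrian; the $r=1$ necessity then forces that stabilization to bound a Lagrangian disk in $B^4$, contradicting the adjunction constraint $tb=-1$ for such boundaries. The main obstacles I expect are (i) rigorously identifying the boundary of the cocore as $L$ (not some isotopically non-obvious companion) in the $r=1$ cancellation step, and (ii) arranging the Ding--Geiges decomposition so the $(+1)$-surgery to which the obstruction is applied is genuinely on a knot with $tb<-1$, with control over the intermediate contact manifolds.
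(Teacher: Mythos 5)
Your $r=1$ skeleton matches the paper's: cap off a strong filling with a Weinstein $2$--handle along a push-off of $L$, identify the result with $B^4$, and read off the co-core as the Lagrangian disk; conversely excise a neighborhood of the disk. But two steps are incomplete. First, McDuff's theorem identifies a \emph{minimal} strong filling of $(S^3,\xi_{std})$ with $B^4$; a general one is only a blow-up of $B^4$, in which the co-core would not give a disk in $(B^4,\omega_{std})$. The paper first blows down $(X,\omega)$ and then proves (Lemma~\ref{minimal}, via a pluri-subharmonic maximum-principle argument on the handle-attachment collar) that Weinstein $2$--handle attachment preserves minimality; without this your appeal to ``uniqueness of fillings of $S^3$'' is unavailable. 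Second, your exactness argument (``attaching a Weinstein handle preserves the Liouville structure'') does not apply: the filling is obtained by \emph{removing} a neighborhood of $D$ from $B^4$, and the complement is a Weinstein subdomain only when $D$ is regular --- exactly the extra hypothesis of the Stein clause, not something available for a general Lagrangian disk. The paper instead argues cohomologically: $H^2(X)=0$ gives a global primitive of $\omega$, which is corrected by a closed form and a cutoff exact form to agree with the Liouville primitive near $\partial X$. (Also, the Weinstein neighborhood theorem is stated for closed Lagrangians; for a disk transverse to a convex boundary one must first make the Liouville field tangent to $D$ near $\partial D$, the Etnyre--Lidman--Ng input of Lemma~\ref{mt}.)

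The $r\in(0,1)$ argument is the genuine failure. In the Ding--Geiges--Stipsicz decomposition the $(+1)$--surgeries are performed on \emph{unstabilized} push-offs of $L$ and precede the $(-1)$--surgeries on stabilized push-offs; fillability of the final manifold does not descend to the intermediate manifold obtained after only the $(+1)$--surgeries, because one cannot ``un-attach'' a Weinstein handle --- undoing a $(-1)$--surgery symplectically requires exhibiting a Lagrangian disk (the would-be co-core) inside the given filling, which is precisely what the whole theorem is about. So your step of ``absorbing the $(-1)$--handles'' runs in the wrong direction. Moreover, even granting such a reduction, when $tb(L)=-1$ the constraint $tb=-1$ yields no contradiction, and this case genuinely needs a different tool: the paper invokes Mark--Tosun's vanishing of the Heegaard Floer contact invariant for these surgeries together with Ghiggini's non-vanishing for strongly fillable structures. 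For $tb(L)\neq -1$ the paper proceeds oppositely to your plan: starting from a putative filling of contact $(r)$--surgery it attaches Gay's symplectic $2$--handles realizing admissible transverse surgeries (stepping along the Farey tessellation from $r$ up to $1$) to manufacture a strong filling of contact $(+1)$--surgery, and only then applies the $r=1$ case to force $tb(L)=-1$, a contradiction. You would need both of these ingredients to close the $(0,1)$ case.
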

Recall that a Lagrangian disk is decomposable if it can be constructed from maximal Thurston--Bennequin unknots by ``pinch moves" and Legendrian isotopy and a Lagrangian disk in a Weinstein manifold is regular if there is a Liouville vector field for the symplectic form that is tangent to the Lagrangian disk. See Section~\ref{dls} for more details on decomposable and regular Lagrangian submanifolds. There we will show that every decomposable disk is regular, but it is not known if Lagrangian disks must be regular, or if regular disks must be decomposable. 

\begin{remark}
The proof of Theorem~\ref{thm1} allows the following generalization to weak symplectic fillings
: for $r \in (0, 1]$, contact $(r)$--surgery on $L$ is weakly symplectically fillable if and only if $r = 1$ and $L$ bounds a null-homologous Lagrangian disk in some blow-up of $(B^4, \omega_{std})$.  This implies that $tb(L) = -1$, $rot(L) = 0$, and $\tau(L) = 0$.  We do not know whether there are any examples of such $L$ that do not also bound Lagrangian disks in $(B^4, \omega_{std})$.
\end{remark}

\begin{remark}
If $r$ is not $1/n$ for some integer $n$, then contact $(r)$--surgery is not uniquely defined but depends on choices of stabilizations of the Legendrian knot and its Legendrian push-offs \cite{DingGeigesStipsicz04}. We are particularly interested in the choice corresponding to inadmissible transverse surgery \cite{Conway14?}, as this seems the most natural and is the one most studied  \cite{Golla15, LiscaStipsicz04, MarkTosunPre}.  
Specifically, we choose all negative stabilizations for contact $(r)$--surgery, and we will always be considering this contact structure when discussing contact $(r)$--surgery. 
\end{remark}

\begin{remark}
The $r=1$ case of the theorem will probably not be a surprise to the expert reader. The proof is largely straight-forward, and except for some normalization near the boundary of the Lagrangian disk coming from \cite{EtnyreLidmanNgPre} and cited in  Lemma~\ref{mt}, the arguments largely rely on standard regular neighborhood theorems similar to those in \cite{Weinstein71}. The real interest in this case is in realizing that this is the the right statement to solve the filling problem and that it can be generalized to $r\in(0,1]$ and partially generalized to $r>1$, see below. 
\end{remark}

This theorem immediately gives the following obstructions to a knot having a strongly fillable contact $(+1)$--surgery. 
\begin{corollary}
If contact $(+1)$--surgery on $L$ is symplectically fillable, then 
\begin{enumerate}
\item $tb(L)=-1$ and $rot(L)=0$,
\item the knot type of $L$ is quasi-positive,
\item  the knot type of $L$ is slice, and
\item $\tau(L)=0$ and $\epsilon(L) = 0$,
\end{enumerate}
where $\tau(L)$ and $\epsilon(L)$ are Heegaard Floer concordance invariants of the smooth knot type of $L$.
\end{corollary}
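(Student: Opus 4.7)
The plan is to use Theorem~\ref{thm1} as a black box: strongly fillable contact $(+1)$--surgery on $L$ forces $L$ to bound a Lagrangian disk $D \subset (B^4,\omega_{std})$. Each of the four conclusions then follows from an existing theorem applied to this disk, so the proof is essentially a chain of citations; the only non-trivial content is recognizing which invariants of $L$ are controlled by the existence of $D$.

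For item (1), I would cite Chantraine's formula for Lagrangian fillings: for any orientable Lagrangian $\Sigma \subset (B^4,\omega_{std})$ with $\partial\Sigma = L$, one has $tb(L) = -\chi(\Sigma)$ and $rot(L) = 0$. Taking $\Sigma = D$ gives $\chi(D) = 1$ and hence $tb(L) = -1$, $rot(L) = 0$. For item (3), the statement is immediate: a Lagrangian disk is, in particular, a smooth embedded disk in $B^4$ with boundary $L$, so $L$ is smoothly slice.

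For item (2), I would appeal to the theorem of Boileau--Orevkov that any Legendrian knot in $(S^3,\xi_{std})$ which bounds a Lagrangian surface in $(B^4,\omega_{std})$ has quasi-positive underlying knot type (their proof goes through the identification with symplectically embedded holomorphic curves). Applying this to $L$ and $D$ gives the quasi-positivity of the knot type of $L$.

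For item (4), both vanishing statements are consequences of (3) together with the behavior of these invariants under smooth concordance. By Ozsv\'ath--Szab\'o, $|\tau(K)| \le g_4(K)$; since $L$ is slice we obtain $\tau(L) = 0$. By Hom, $\epsilon$ is a smooth concordance invariant, and a slice knot is concordant to the unknot, so $\epsilon(L) = \epsilon(\text{unknot}) = 0$. No step is a real obstacle: the content of the corollary is just that Theorem~\ref{thm1} reduces these four statements, previously studied in disparate settings, to known properties of Lagrangian-fillable Legendrian knots.
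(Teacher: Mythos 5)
Your proposal is correct and follows essentially the same route as the paper: invoke Theorem~\ref{thm1} to get a Lagrangian disk, then read off each item from known properties of Lagrangian fillings (Chantraine's $tb$/$rot$ formula, sliceness, quasi-positivity via perturbing the Lagrangian disk to a symplectic one, and vanishing of $\tau$ and $\epsilon$ for slice knots). The only cosmetic difference is attribution for item (2): the paper routes the quasi-positivity claim through the perturbation of a Lagrangian filling to a symplectic filling of the transverse push-off as in \cite{CaoGallupHaydenSabloff14}, which is the step that makes the Boileau--Orevkov-type conclusion applicable.
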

\begin{proof}
Since $L$ bounds a Lagrangian disk in the 4--ball it is clearly slice. It is also easy to compute that the Thurston--Bennequin invariant is $-1$ and the rotation class is $0$, see \cite{Chantraine10}. Since $L$ is slice, $\tau(L)=0$ and $\epsilon(L) = 0$ by \cite{Hom14}. A Lagrangian filling of a Legendrian knot can be perturbed to be a symplectic filling (and hence a complex filling) of the transverse push-off of $L$, so $L$ is quasi-positive, see \cite{CaoGallupHaydenSabloff14}.
\end{proof}

\subsection{Legendrian knots bounding Lagrangian disks}
As indicated above, there are many knots that satisfy the condition of Theorem~\ref{thm1}, although there are not too many with small crossing number.  
In \cite{CornwellNgSivek16}  it was shown that for knots with 12 or fewer crossings, the only examples are $\overline{9_{46}}$, $\overline{10_{140}}, 11n_{139}, 12n_{582}, \overline{12n_{768}},$ and $12n_{838}$, see Figure~\ref{m946}. Here, the names are the ones given in KnotInfo \cite{ChaLivingston}, and a bar over the name indicates the mirror of that knot. There are 17 knots with 13 or 14 crossings that have Legendrian representatives that bound decomposable Lagrangian disks. Moreover, starting from these small crossing knots, one can sometime produce infinitely many examples of knots that satisfy the conditions of Theorem~\ref{thm1}. For example, in the knot diagram of $\overline{9_{46}}$ (which is also known as the Pretzel knot $P(-3,-3,3)$) shown in Figure~\ref{m946}, one can introduce $m\geq 0$ half-ribbon twists in the right upper crossing to obtain an infinite family of knots, $P(-3-m,-3,3)$, each of which bounds a Lagrangian disk.    

\begin{figure}[h]
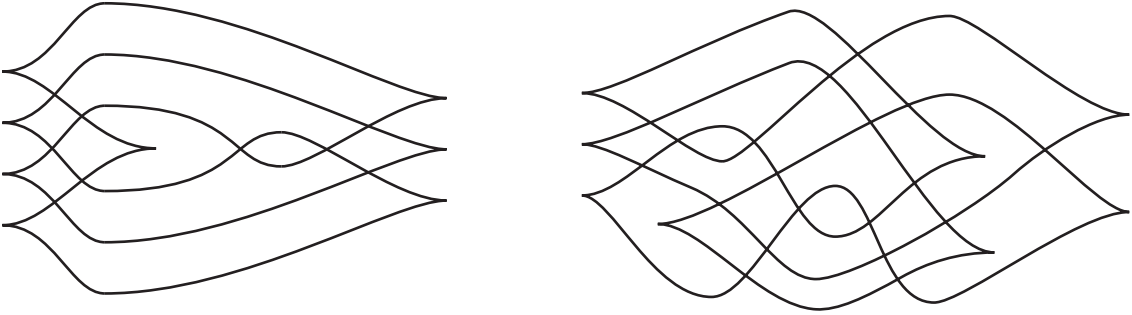

\tiny
\begin{overpic}
{m946+}
\end{overpic}
\caption{Mirror of the $9_{46}$ knot and the $12n_{838}$ knot.}
\label{m946}
\end{figure}    

\begin{remark}
There are many examples of quasi-positive slice knots with maximal Thurston--Bennequin invariant less than $-1$, for example the knots $8_{20}$ and $10_{155}$. These knots bound complex disks in the 4--ball, but not Lagrangian disks.  
\end{remark}

In the proof of Theorem~\ref{thm1} we will see that the symplectic filling of contact $(+1)$--surgery on a Legendrian knot $L$ that bounds a Lagrangian disk $D$ in $(B^4, \omega_{std})$ is constructed by removing a neighborhood of $D$ from $B^4$. So it is interesting to wonder whether or not distinct Lagrangian disks with boundary $L$ can lead to distinct symplectic fillings. For example, \cite{EliashbergPolterovich96} shows that the maximal Thurston-Bennequin unknot essentially has one Lagrangian filling disk (at least if a suitably large portion of the symplectization of $S^3$ is added to $B^4$) and it is also known that contact $(+1)$--surgery on the maximal Thurston-Bennequin unknot has a unique Stein filling up to symplectomorphism. In contrast,  Ekholm \cite{Ekholm2016} proved that the knot $\overline{9_{46}}$ has two Lagrangian disk fillings which are not Hamiltonian isotopic. It is known that the complements of neighborhoods of these two ribbon disks are diffeomorphic 4--manifolds, but one naturally wonders whether or not these disks give rise to non-symplectomorphic fillings of contact $(+1)$--surgery on the Legendrian knot $\overline{9_{46}}$. We generalize this in the form of the following question.

\begin{question}
Let $L$ be a Legendrian knot in $(S^3, \xi_{std})$ with two (or more) distinct Lagrangian disk fillings in $(B^4,\omega_{std})$. Does contact $(+1)$--surgery on $L$ have more than one Stein (or symplectic) filling up to symplectomorphism? 
\end{question}

The connected sum and certain cables of Legendrian knots bounding Lagrangian disks also bound Lagrangian disks. Specifically, it is easy to prove the following results. The former seems to have first been noticed by Ekholm, Honda, and K\'alm\'an \cite{EkholmHondaKalman16} when they introduced the notion of decomposable cobordisms, while the latter follows directly from an observation of Cornwell, Ng, and Sivek \cite{CornwellNgSivek16} in the non-decomposable case and an argument of Liu, Sabloff, Yacavone, and Zhou \cite{LiuSabloffYacavoneZhouPre} in the decomposable case. 
\begin{proposition}[Ekholm, Honda, and Kalman 2016, \cite{EkholmHondaKalman16}]\label{connectsum}
If $L$ and $L'$ are Legendrian knots in the boundary of a symplectic manifold with convex boundary $(X,\omega)$, and they bound disjoint (decomposable) Lagrangian disks, then $L\# L'$ also bounds a (decomposable) Lagrangian disk. 
\end{proposition}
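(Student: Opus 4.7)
The plan is to produce the disk for $L \# L'$ by attaching a single pinch (saddle) cobordism to the disjoint union $D \sqcup D'$. After enlarging $X$ by a collar $(-\epsilon, 0] \times \partial X$ equipped with the symplectic form $d(e^t\alpha)$---which does not alter the symplectomorphism type of the filling---extend $D$ and $D'$ through this collar by the Lagrangian cylinders over $L$ and $L'$. We now have room to work inside a piece of the symplectization of $\partial X$ while leaving the original disks untouched deeper inside $X$.

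Next, perform an ambient Legendrian isotopy of $L \sqcup L'$, realized as a Lagrangian isotopy of the cylinders in the symplectization, that brings the two components into a Darboux ball so that their front projections cross each other in a single arc, in the position used to define the Legendrian connected sum. At this crossing apply the elementary pinch cobordism of \cite{EkholmHondaKalman16}: this is an exact Lagrangian saddle in a standard local model, of Euler characteristic $-1$, whose top end is the two strands at the crossing and whose bottom end is their oriented resolution. Because the strands belong to different components of $L \sqcup L'$, the resolution merges the two components, producing the Legendrian connected sum $L \# L'$. Stacking this saddle on top of the cylinders over $L$ and $L'$ and gluing to $D \sqcup D'$ gives a connected Lagrangian surface $\Sigma \subset X$ with $\partial \Sigma = L \# L'$ and
\[
\chi(\Sigma) = \chi(D) + \chi(D') - 1 = 1,
\]
so $\Sigma$ is a Lagrangian disk.

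For the decomposable statement, recall that $D$ and $D'$ being decomposable means each admits a decomposition into elementary pieces (minimum disks at max-$tb$ unknots, Legendrian isotopies, and pinch moves). Concatenating these two decompositions (applied in parallel on the two disjoint components) with the Legendrian isotopy and the single final pinch above gives a decomposition of $\Sigma$ into the same elementary pieces, establishing decomposability. The main technical subtlety is checking that the Legendrian isotopy used to bring $L$ and $L'$ into pinch position can be carried out in the collar without modifying the disks deeper inside $X$; this is automatic since the isotopy occurs in an arbitrarily thin neighborhood of $\partial X$, and the cylinders $\R \times L$ and $\R \times L'$ cleanly separate the boundary isotopy from $D \sqcup D'$.
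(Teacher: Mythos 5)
Your proposal is correct and is essentially the paper's own argument: the paper's proof consists precisely of the observation that the Legendrian connected sum is achieved by a pinch move applied to the disjoint union of the two disks. Your additional details (extending by cylinders in a collar, the Euler characteristic count, and concatenating the elementary decompositions for the decomposable case) simply make explicit what the paper leaves implicit.
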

For cables we have the following. 
\begin{proposition}\label{cable}
Let $(M,\xi)$ be a contact 3--manifold manifold with symplectic filling $(X,\omega)$. If $L$ is a Legendrian knot in $(M,\xi)$ that bounds a (decomposable) Lagrangian disk in $(X,\omega)$, then the $(n,1)$--cable of $L$ also bounds a (decomposable) Lagrangian disk in $(X,\omega)$. 
\end{proposition}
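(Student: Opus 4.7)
The plan is to build the Lagrangian filling of the $(n,1)$-cable of $L$ by attaching, to $n$ parallel push-offs of $D$, a ``cabling cobordism'' built out of $n-1$ pinch moves in a collar of $M$.

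Using the Liouville vector field that exists near $\partial X$ in any strong symplectic filling, I would identify a collar of $M$ in $X$ with a sliver of the symplectization $(M \times (-\epsilon, 0], d(e^t\alpha))$, so that $M = M \times \{0\}$ and the interior of $X$ corresponds to $t < 0$. Inside a standard contact neighborhood $N \cong J^1(S^1)$ of $L$ in $M$, I place a Legendrian representative of the $(n,1)$-cable of $L$ on $M \times \{0\} = \partial X$ and $n$ disjoint Legendrian push-offs $L_1, \ldots, L_n$ of $L$ on $M \times \{-\delta\}$ for a small $\delta > 0$. A standard construction with $n-1$ Legendrian pinch moves, arranged cyclically along the push-offs, produces a decomposable Lagrangian cobordism $C \subset N \times [-\delta, 0]$ from $L_1 \sqcup \cdots \sqcup L_n$ at the bottom to the cable at the top; topologically $C$ is a planar surface with $\chi(C) = 1 - n$ and $n+1$ boundary components.

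Next I would realize $n$ disjoint parallel Lagrangian push-offs $D_1, \ldots, D_n$ of $D$ inside $X \cap \{t \le -\delta\}$ with $\partial D_i = L_i \subset M \times \{-\delta\}$. This uses the relative Weinstein Lagrangian neighborhood theorem for Lagrangians with Legendrian boundary: a tubular neighborhood of (a slightly-trimmed) $D$ in $X$ is symplectomorphic to a neighborhood of the zero section in $T^*D$, in a way that is compatible with the standard contact neighborhood of $L$ used to build $C$. Since every closed $1$-form on the disk $D$ is exact, graphs of small exact $1$-forms $df_i$ on $D$ provide the desired parallel copies, with boundaries matching the push-offs $L_i$.

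Concatenating, the surface $C \cup D_1 \cup \cdots \cup D_n$ is a connected Lagrangian in $X$ with a single boundary component (the $(n,1)$-cable of $L$ on $\partial X$) and Euler characteristic $n + (1 - n) = 1$, hence a Lagrangian disk. If $D$ is decomposable, each $D_i$ inherits its decomposition and $C$ is assembled from pinch moves and Legendrian isotopies by construction, so the resulting Lagrangian disk is decomposable. The main technical subtlety I expect is arranging the Weinstein neighborhood of $D$ and the standard contact neighborhood of $L$ compatibly, so that the $\partial D_i$ agree exactly with the $L_i$ at $M \times \{-\delta\}$ where the cobordism $C$ is glued on; this is a routine consequence of the relative Weinstein/Darboux neighborhood theorem but must be set up carefully before invoking it.
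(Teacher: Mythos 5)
Your construction breaks down at the step where you assert the existence of $n$ \emph{disjoint} parallel Lagrangian copies $D_1,\dots,D_n$ of $D$ with boundaries the $n$ Legendrian push-offs $L_1,\dots,L_n$. Such disjoint copies do not exist. Work inside the Weinstein neighborhood $N$ of $D$, which is diffeomorphic to $B^4$ with $\partial N\cong S^3$. Since $D$ is a connected properly embedded surface in this homology ball, the framing it induces on $L=\partial D$ is the Seifert framing of $L$ in $\partial N$, and by the relative Bennequin-type equality for Lagrangian fillings (Chantraine), the contact framing differs from the $D$-framing by $-\chi(D)=-1$. Hence any two disjoint Legendrian (contact-framed) push-offs $L_i,L_j$ have $lk_{\partial N}(L_i,L_j)=-1$, so \emph{any} two properly embedded surfaces in $N$ bounding them --- Lagrangian or not --- have algebraic intersection number $-1$ and must intersect. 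Concretely, in the model $T^*D$ your copies are graphs of $df_i$ with the $f_i-f_j$ prescribed and non-constant on the collar; such a difference function on the disk necessarily has an interior critical point, which is exactly an intersection point of $\operatorname{graph}(df_i)$ with $\operatorname{graph}(df_j)$. So $C\cup D_1\cup\cdots\cup D_n$ is not embedded, and the Euler characteristic count, while correct arithmetically, is computing the wrong thing. (A secondary, fixable issue is verifying that your $n-1$ cyclically arranged pinches on contact-framed push-offs produce the $(n,1)$-cable with the correct twisting rather than some other $(n,k)$-cable.)

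This obstruction is precisely why the paper takes a different route: puncture $D$ at an interior point to obtain a Lagrangian concordance from the maximal Thurston--Bennequin unknot $U$ to $L$, apply the Legendrian satellite construction of Cornwell--Ng--Sivek to that \emph{connected} concordance to get a concordance from the $(n,1)$-cable of $U$ to the $(n,1)$-cable of $L$, and then use that the $(n,1)$-cable of $U$ is again $U$, which bounds the standard Lagrangian disk. There the $n$-stranded pattern rides along a single connected annulus in a neighborhood of the concordance, and the disk is only filled in once at the $U$ end, so no disjoint parallel copies of $D$ are ever needed. If you want to salvage a direct construction you would have to find a replacement for the disjoint $D_i$ --- for instance by running the pattern through a tubular neighborhood of the single disk $D$ rather than duplicating $D$ --- which essentially reproduces the satellite argument.
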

An example of this is the $(3,1)$--cable of the mirror of the $9_{46}$ knot, shown in Figure~\ref{cablem946}.
\begin{figure}[h]
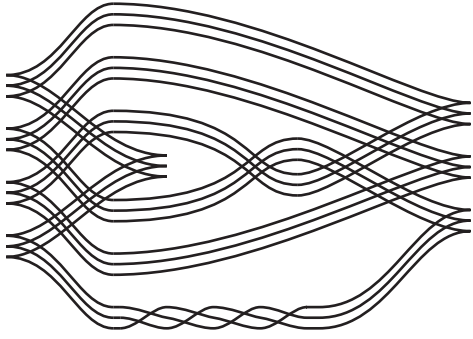

\tiny
\begin{overpic}
{cablem946}
\end{overpic}
\caption{The $(3,1)$-cable of the mirror of the $9_{46}$ knot.}
\label{cablem946}
\end{figure}  

In \cite{Yasui15preprint}, Yasui showed how to build other Legendrian knots that bound Lagrangian disks. This technique was further explored by McCullough in \cite{McCulloughPre}. The latter work starts by proving the following ``folk result" that all ribbon knots can be described as follows. 
\begin{theorem}\label{ribbon}
Let $K$ be a ribbon knot in $S^3$. There is a handle presentation of $B^4$ consisting of one 0--handle, and $n$ cancelling $1$, $2$--handle pairs. In this handlebody, there is an unknot in the boundary of the 0--handle that is disjoint from the $1$ and $2$--handles, such that when the $1$ and $2$--handles are cancelled, the unknot becomes $K$.
\end{theorem}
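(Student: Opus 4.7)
The strategy is to translate the ribbon presentation of $K$ directly into a handle diagram of $B^4$. Since $K$ is ribbon, it bounds an immersed disk $\bar D \subset S^3$ with only ribbon singularities, and we may present $\bar D$ as the union of $n+1$ disjoint embedded disks $D_0, \ldots, D_n$ and $n$ embedded bands $B_1, \ldots, B_n$ in $S^3$, with common boundary $K$, where each band crosses the interiors of the disks only in ribbon arcs (arcs going transversely across a band). The integer $n$ here will be the number of cancelling pairs in the conclusion.

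To build the handle decomposition, start with $B^4$ as a 0--handle with boundary $S^3$. For each band $B_j$, introduce a cancelling 1--2--handle pair as follows: the 1--handle is represented in the Kirby diagram by a small dotted unknot $\alpha_j$ in $S^3$ that encircles the band $B_j$ at some point along its interior, and the 2--handle is attached along a circle $\gamma_j$ that is a meridian of $\alpha_j$, linking it geometrically once, with framing chosen so that $(\alpha_j, \gamma_j)$ forms a geometrically cancelling pair. Since introducing cancelling 1--2--handle pairs does not change the underlying 4--manifold, the result is still $B^4$, now with $n$ cancelling pairs.

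Define the unknot $U \subset \partial(\text{0--handle}) = S^3$ as follows. Modify the disks--plus--bands surface $\bigcup_i D_i \cup \bigcup_j B_j$ by pushing each band $B_j$ off the disks at every ribbon-intersection arc; the result is an honest embedded disk--with--bands in $S^3$, whose boundary is unknotted. Let $U$ be this unknot. By taking the $\alpha_j$ small and localized near the bands, we can arrange $U$ to be disjoint from every $\alpha_j$ and $\gamma_j$. The key geometric claim is that cancelling each pair $(\alpha_j, \gamma_j)$ reintroduces the ribbon crossing that was removed from $B_j$: the standard cancellation isotopy of $B^4$ drags the strand of $U$ that lies alongside $B_j$ through the $\alpha_j$--tunnel and across the $\gamma_j$--disk, which is precisely the crossing--change modification that turns the pushed--off boundary back into the ribbon-intersecting boundary. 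After all $n$ cancellations, $U$ has been transformed into $K$.

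The main obstacle is pinning down this local model: at each ribbon intersection, one must verify that the described cancellation isotopy really does effect the desired crossing modification of $U$, and that the 2--handle framing needed for $(\alpha_j, \gamma_j)$ to cancel geometrically is compatible with this effect (rather than introducing an extra twist). This reduces to a local Kirby-calculus computation in a standard ball neighborhood of a single ribbon singularity. Once the local picture is pinned down, the global statement follows immediately by carrying out the construction at each of the $n$ ribbon singularities of $\bar D$ independently, since they are disjoint and each is contained in its own coordinate ball.
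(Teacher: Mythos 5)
First, a point of reference: the paper does not actually prove Theorem~\ref{ribbon}; it quotes it as a ``folk result'' established in McCullough's work \cite{McCulloughPre}, inspired by and illustrated through Yasui's example \cite{Yasui15preprint} in Figure~\ref{handle}. So your proposal can only be compared against that construction, and your global strategy does match the folk idea: resolve the ribbon singularities of the disks-plus-bands surface to obtain an embedded disk with unknotted boundary $U$, and insert cancelling $1$,$2$--handle pairs whose cancellation drags the bands back through the disks, recovering $K$.

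However, your local model fails as stated, and this is precisely where the content of the theorem lies --- a point you effectively concede when you write that ``the main obstacle is pinning down this local model.'' If $\gamma_j$ is literally a meridian of the dotted circle $\alpha_j$, linking $\alpha_j$ geometrically once and nothing else, then cancelling the pair acts \emph{trivially} on $U$: each strand of $U$ threading $\alpha_j$ is slid off the $1$--handle by band-summing it with a framed parallel copy of $\gamma_j$, and since $\gamma_j$ is a small split unknot, these detour loops contract once the pair is erased. After all cancellations your $U$ is therefore still the unknot, not $K$. For the cancellation to reintroduce the ribbon passage, $\gamma_j$ must \emph{not} be split from the rest of the picture: it must run over the $1$--handle once \emph{and} thread the ribbon slit, i.e., pierce the sheet of the disk $D_i$ at the singularity and hence link the nearby strands of $U$, so that the dragged band-edges are carried through $D_i$. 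This is visible in Figure~\ref{handle}: the $2$--handle circle there clasps the strands of the unknot rather than sitting as a split meridian, and the twist parameter $m$ records the framing --- which brings up a second unresolved point, since smoothly the pair cancels for \emph{any} framing once the geometric intersection with the belt sphere is one, so your phrase ``framing chosen so that the pair cancels'' does not pin down the framing; rather, the framing must be chosen (zero, in the natural conventions) to avoid introducing full twists between the two dragged band-edges. Finally, a bookkeeping error: the cancelling pairs should be indexed by ribbon singularities (band--disk intersection arcs), not by bands; a single band may pass through several disks, so the number of pairs needed is the number of singularities, which in general exceeds the number of bands. None of this is irreparable --- the corrected local model is exactly Yasui's --- but as written, the key geometric claim is both left unverified by your own admission and false for the placement of $\gamma_j$ you describe.
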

Theorem~\ref{ribbon} was inspired by Yasui's example shown in Figure~\ref{handle} that also illustrates what the theorem says.  
\begin{figure}[h]
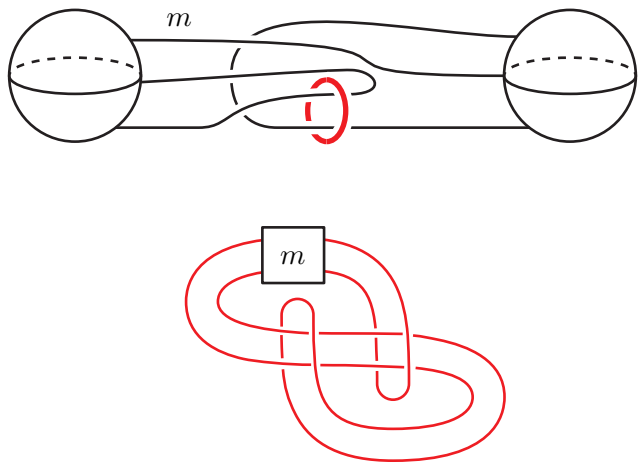

\begin{overpic}
{handlepicture}
\put(60,165){$m$}
\put(103,75){$m$}
\end{overpic}
\caption{The top figure shows a cancelling 1, 2--handle pair in a handle decomposition of $B^4$ together with an unknot in the boundary of the 0--handle. The bottom figure shows the what becomes of the unknot after the 1 and 2--handles are canceled.}
\label{handle}
\end{figure}  
Notice that the complement of the slice disk has a handle presentation obtained by turning the unknot in the diagram into a 1--handle by putting a dot on the unknot (that is using dotted circle notation for 1--handles). 

Given a presentation for a ribbon knot as in Theorem~\ref{ribbon}, then if the link to which the 2--handles are attached can be Legendrian realized so that Legendrian surgery gives the desired smooth surgery, then we have a Stein presentation of the standard symplectic structure on the 4-ball. Moreover, if the above can be done so that the unknot in the picture can be Legendrian realized with $tb=-1$, then it will bound a Lagrangian disk in the Stein manifold (since the maximal Thurston--Bennequin invariant unknot in the boundary of the 0--handle bounds such a disk). The Stein 1 and 2--handles can be cancelled to produce the standard picture of the 4--ball. After this cancellation the unknot becomes a Legendrian presentation of $K$ that clearly bounds a Lagrangian disk (since the knot before cancellation did). See Figure~\ref{stein} for an example of this that originally appeared in \cite{Yasui15preprint}. 
\begin{figure}[h]
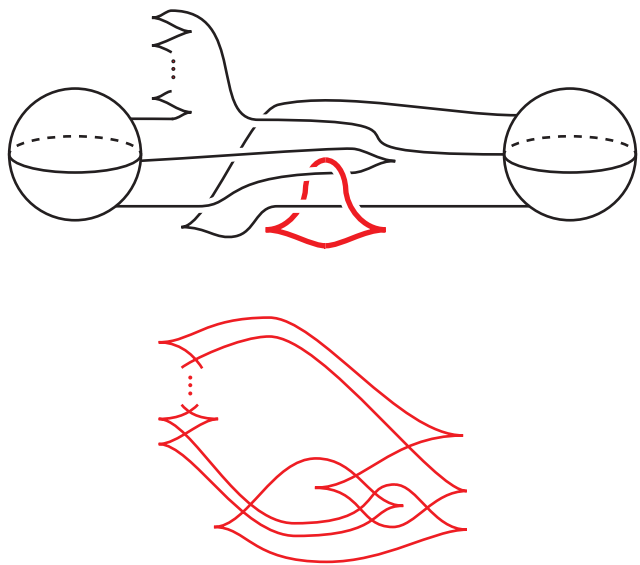

\begin{overpic}
{stein}
\end{overpic}
\caption{The top figure shows Stein manifold presentation of the 4-ball with a Legendrian knot in the boundary that obviously bounds a Lagrangian disk. The Legendrian knot has Thurston--Bennequin invariant $-t$, where $t+1$ is the number of left cusps in the figure. The bottom figure is the Legendrian knot after the 1, 2--handles have been cancelled. There are $t+1$ full left handed twist in the upper left side of the bottom figure.}
\label{stein}
\end{figure}  
Removing a neighborhood of the Lagrangian disk that $K$ bounds results in a Stein manifold with Kirby diagram obtained from the Stein presentation of the 4--ball by turning the unknot into a 1--handle (that is putting a dot on the unknot). Notice that the boundary of this Stein manifold is the result of contact $(+1)$--surgery on $K$ (this is smooth 0--surgery on $K$).

The above construction yields many Legendrian knots bounding Lagrangian disks. In fact, it generates all such knots bounding regular Lagrangian disks. 
\begin{theorem}\label{construction}
A Legendrian knot $L$ in $(S^3,\xi_{std})$ bounds a regular Lagrangian disk in $(B^4,\omega_{std})$ if and only if it comes from the above construction. 
\end{theorem}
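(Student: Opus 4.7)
The forward direction is essentially sketched in the discussion preceding the theorem. Starting with the standard Weinstein structure on $(B^4,\omega_{std})$ given by a single 0-handle, the max-tb Legendrian unknot $U\subset\partial B^4$ bounds a standard Lagrangian 2-disk $D_0$ tangent to the radial Liouville vector field, so $D_0$ is regular. The Weinstein 1 and 2-handle pairs attached in the construction can be chosen so that their attaching regions are disjoint from a neighborhood of $U$, so the Liouville field may be taken to remain tangent to $D_0$ after attachment. Once the pairs are cancelled (which preserves the Weinstein structure up to Weinstein homotopy), the resulting manifold is $(B^4,\omega_{std})$ and $U$ becomes the given Legendrian representative of $K=L$; the disk $D_0$ becomes a regular Lagrangian disk bounded by $L$, completing this direction.

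For the reverse direction, suppose $L$ bounds a regular Lagrangian disk $D\subset(B^4,\omega_{std})$. The plan is to produce a Weinstein handle decomposition of $(B^4,\omega_{std})$ adapted to $D$ in the following sense: a Weinstein 0-handle containing $D$ as the standard Lagrangian disk of a max-tb unknot $U$ in its boundary, together with Weinstein 1 and 2-handles attached disjointly from $D$. Regularity supplies a Liouville vector field $V$ tangent to $D$, and the plan is to choose an associated Weinstein function $\phi$ whose Liouville field is $V$ and, after a Weinstein homotopy, to arrange that $\phi|_D$ has a single index-$0$ critical point $p$ in the interior of $D$. The unstable manifold of $p$ under $V$ then contains $D$, and combining the Weinstein normal-form theorem for index-$0$ critical points with the normalization of $V$ near $D$ coming from Lemma~\ref{mt}, one sees that the 0-handle around $p$ can be taken to contain $D$ as the standard Lagrangian disk of a max-tb Legendrian unknot $U\subset\partial(0\text{-handle})$. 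The remaining critical points of $\phi$ lie off of $D$ and produce Weinstein 1 and 2-handles attached disjointly from $U$.

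Because $(B^4,\omega_{std})$ is topologically trivial, the remaining 1 and 2-handles algebraically cancel, and by Weinstein handle moves performed away from $D$ (handle slides and cancellations among these handles only) they can be arranged into geometrically cancelling pairs as required by the construction. Cancelling the pairs turns $U$ into the prescribed Legendrian representative $L$ and exhibits the decomposition described before the theorem. The main obstacle is the construction of the adapted Weinstein handle decomposition, which uses the regularity of $D$ in an essential way; once it is in hand, the handle-theoretic rearrangement is standard, and this is the step that is not expected to be available for a general Lagrangian disk (hence the restriction to regular disks in the statement).
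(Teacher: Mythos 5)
Your forward direction matches the paper's: the disk in the $0$--handle is tangent to the Liouville field of the handlebody structure, the remaining handles are attached away from it, and cancelling Weinstein handle pairs is a Weinstein homotopy, so the disk is regular in $(B^4,\omega_{std})$.

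The reverse direction, however, has a genuine gap. You correctly identify that the whole problem is to produce a Weinstein handle decomposition of $(B^4,\omega_{std})$ in which $D$ sits as the equatorial Lagrangian disk of the $0$--handle, but the passage from ``$V$ is tangent to $D$'' to ``after a Weinstein homotopy, $\phi|_D$ has a single index-$0$ critical point and the $0$--handle around it contains $D$ as the standard disk'' is asserted as a ``plan'' rather than proved. This is not a routine Morse-theoretic normalization: a priori $\phi|_D$ can have critical points of any index, $D$ can run through the stable manifolds of higher-index critical points of $\phi$ (i.e.\ lie partly in the skeleton away from the $0$--handle), and rearranging this requires a homotopy of the entire Weinstein structure, not just of $\phi|_D$. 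You explicitly flag this as ``the main obstacle,'' which is an accurate self-diagnosis: the obstacle is exactly the content of the Eliashberg--Ganatra--Lazarev result, stated in the paper as Theorem~\ref{cocore}, which says that a properly embedded Lagrangian disk is regular if and only if some Weinstein handlebody structure realizes it as the obvious Lagrangian disk in the $0$--handle (equivalently, as the co-core of a $2$--handle). The paper's proof of the reverse direction is a one-line application of that theorem: regularity gives the adapted handle structure, which is precisely the set-up of the Yasui--McCullough construction following Theorem~\ref{ribbon}. So the fix is to invoke Theorem~\ref{cocore} (already used in the paper for Corollary~\ref{steincomp}) rather than attempt to rederive it; as written, your argument would only be complete if you supplied a proof of that theorem, and the sketch given does not do so.

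A smaller point: the final paragraph's claim that the leftover $1$-- and $2$--handles can be rearranged into geometrically cancelling pairs by moves supported away from $D$ is also more than is needed. The construction only requires a Weinstein presentation of $B^4$ with a $tb=-1$ unknot in the boundary of the $0$--handle disjoint from the other handles, which becomes $L$ after the handles are cancelled smoothly; once Theorem~\ref{cocore} is invoked, this is immediate and no further handle-theoretic rearrangement has to be justified.
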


\begin{remark}
We end by noting that for certain Legendrian knots $L$ bounding Lagrangian disks constructed as above, Yasui shows how to create a Lagrangian representative of the $(n,-1)$--cable of $L$ that also bounds a Lagrangian disk, for some positive $n$. This is quite interesting as our construction in Proposition~\ref{cable} will produce a Lagrangian disk with boundary the $(n,1)$-cable of any Legendrian knot $L$ bounding a Lagrangian disk. We do not see any relation between these two constructions, nor a simple geometric construction of the Lagrangian disks coming from Yasui's construction. 
\end{remark}

\subsection{Larger contact surgeries}
Having completely answered the question of when contact $(r)$--surgery on $L\subset (S^3,\xi_{std})$ is symplectically fillable for $r \leq 1$, it is natural to ask what happens for $r>1$. 

We begin by noticing that building on work of Ghiggini, Lisca, and Stipsicz \cite{GhigginiLiscaStipsicz07} and Owens and Strle \cite{OwensStrle12} in certain cases, one can get symplectically fillable contact structures for sufficiently large $r$. 
\begin{proposition}\label{torusfill}
For a positive $(p,q)$--torus knot with maximal Thurston--Bennequin invariant, contact $(r)$--surgery will give a Stein fillable contact structure if $r\geq p+q-1$.

Additionally, for the $(2,2n+1)$--torus knot with maximal Thurston--Bennequin invariant, contact $(r)$--surgery is tight for all $r > 0$, and is symplectically (and Stein) fillable if and only if $r\geq 2n+1$. 
\end{proposition}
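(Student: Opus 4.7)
The plan is to handle the two claims with distinct techniques. For the general positive $(p,q)$--torus knot $L$ with $tb(L)=pq-p-q$, contact $(r)$--surgery on $L$ has smooth surgery coefficient $s = pq-p-q+r \geq pq-1$ when $r \geq p+q-1$. I would apply the Ding--Geiges--Stipsicz conversion algorithm: write $r$ via a suitable continued fraction, and convert contact $(r)$--surgery into a sequence of Legendrian $(-1)$--surgeries on stabilized push-offs of $L$ composed with a single contact $(+1)$--surgery on a Legendrian unknot bounding a Lagrangian disk. Since the $(+1)$--surgery on the max-$tb$ unknot is Stein fillable by Theorem~\ref{thm1}, and Weinstein 2--handle attachments preserve Stein fillability, the whole contact $(r)$--surgery acquires a Stein filling. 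The condition $r\geq p+q-1$ is precisely what makes the continued fraction expansion short enough to convert without introducing additional positive contact surgeries that could spoil fillability.

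For the $(2,2n+1)$--torus knot $T$ with $tb(T)=2n-1$, I would proceed in three steps. For \emph{tightness} at all $r > 0$, the smooth surgery yields a small Seifert fibered space over $S^2$, and I would identify the resulting contact structure (under the negative-stabilization convention) with a specific tight contact structure in the classification of Ghiggini, Lisca, and Stipsicz~\cite{GhigginiLiscaStipsicz07}, using either the Heegaard Floer contact invariant or a compatible open book decomposition arising from the surgery description. For \emph{Stein fillability} at $r \geq 2n+1$, the range $r \geq 2n+2 = p+q-1$ is handled by the first statement, while the boundary case $r = 2n+1$ (smooth $4n$--surgery) requires an explicit Stein handlebody construction, obtained by modifying the $r=2n+2$ Weinstein diagram via a stabilization to lower the framing by one, or by appealing to the explicit constructions of Owens--Strle~\cite{OwensStrle12}. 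For \emph{non-fillability} at $0 < r < 2n+1$, the smooth surgery coefficient lies strictly below $4n$, and the work of Owens--Strle supplies a $d$--invariant / Donaldson-type obstruction to the existence of a negative-definite 4--manifold bounded by the resulting rational homology sphere, which rules out any symplectic filling compatible with the contact structure.

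The main obstacle is the borderline case $r = 2n+1$, which sits exactly at the threshold between fillable and non-fillable and therefore requires a sharper construction than the general $(p,q)$ statement provides. A secondary difficulty is the tightness claim across all $r > 0$ in the second statement: one must carefully track the inadmissible-transverse surgery convention through the classification to identify the contact structure with a specific tight one, rather than merely inferring that \emph{some} tight structure exists on the underlying 3--manifold.
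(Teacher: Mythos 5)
Your approach to the first claim does not work as stated. The Ding--Geiges--Stipsicz algorithm converts contact $(r)$--surgery on $L$ for $r>0$ into contact $(+1)$--surgery on (push-offs of) $L$ \emph{itself} followed by Legendrian $(-1)$--surgeries on stabilized push-offs; it never produces a $(+1)$--surgery on an unknot. Since $tb(L)=pq-p-q\neq -1$ for a nontrivial positive torus knot, Theorem~\ref{thm1} says the intermediate contact $(+1)$--surgery on $L$ is \emph{not} fillable, so you cannot build a Stein filling by starting from a fillable manifold and attaching Weinstein $2$--handles. The paper's route is different: at the threshold $r=p+q-1$ the smooth coefficient is $pq-1$, so the surgered manifold is a lens space by Moser; the contact structure is tight by Lisca--Stipsicz, and every tight contact structure on a lens space is Stein fillable by the Giroux--Honda classification. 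Larger $r$ is then handled by Theorem~\ref{BEtheorem}, which realizes contact $(r)$--surgery as Legendrian surgery on a link inside the $(p+q-1)$--surgered manifold.

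For the second claim, you correctly isolate the borderline case $r=2n+1$ (smooth $4n$--surgery) as the crux, but neither of your proposed fixes closes it. Stabilizing the $r=2n+2$ diagram does not obviously yield the contact structure coming from contact $(2n+1)$--surgery, and the Owens--Strle and Lisca--Stipsicz constructions produce \emph{some} Stein fillable contact structure on the $4n$--surgered manifold, which --- as the paper explicitly warns --- need not be the one arising from contact surgery on a Legendrian knot in $(S^3,\xi_{std})$. The missing ingredient is an identification argument: the paper computes the $d_3$ invariant of $\xi_n$ from its surgery presentation and compares it with the $d_3$ invariants of the complete list of tight contact structures on the small Seifert fibered space $M_n$ from Ghiggini--Lisca--Stipsicz, concluding that $\xi_n$ must be isotopic to one of the two Stein fillable ones. (Tightness for all $r>0$ is simply quoted from Lisca--Stipsicz, so the careful tracking of conventions through the classification that you anticipate is not needed for that part.) Finally, note that reducing $r>2n+1$ to the first statement only covers $r\geq 2n+2$; the range $2n+1<r<2n+2$ again requires Theorem~\ref{BEtheorem} applied to the $r=2n+1$ case.
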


In \cite{LiscaStipsicz04, OwensStrle12}, it is shown that the result of smooth $4n$--surgery on the $(2,2n+1)$--torus knot admits Stein fillable contact structures, but they are not necessarily the ones arising from contact surgery on Legendrian knots in $(S^3, \xi_{std})$. The heart of the proof of Proposition~\ref{torusfill} will be to show that the Stein fillable contact structure on smooth $4n$--surgery on the $(2,2n+1)$--torus knot  is indeed the one coming from contact $(2n+1)$--surgery on a Legendrian $(2,2n+1)$--torus knot  in $(S^3, \xi_{std})$.

We now look at topological obstructions to the symplectic fillability of contact $(r)$--surgery on $L$.

\begin{theorem}\label{blowupdisk}
Let $L$ be a Legendrian knot in the standard contact 3--sphere $(S^3, \xi_{std}).$ If contact $(r)$--surgery is symplectically fillable for some $r>0$, then the transverse push-off of $L$ bounds a symplectic disk in $B^4$ blown-up some number of times.
\end{theorem}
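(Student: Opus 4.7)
Let $(X,\omega)$ be a strong symplectic filling of $(Y,\xi_r)$, the result of contact $(r)$--surgery on $L$ for some $r>0$. The plan is to cap off $(X,\omega)$ with a symplectic cobordism to obtain a strong symplectic filling of $(S^3,\xi_{std})$, then apply McDuff's classification to identify this enlarged filling with a blow-up of $(B^4,\omega_{std})$ containing the desired symplectic disk. For the cap, invoke the Ding--Geiges--Stipsicz algorithm \cite{DingGeigesStipsicz04} to decompose contact $(r)$--surgery into contact $(\pm 1)$--surgeries on $L$ and its Legendrian push-offs, respecting the negative-stabilization convention fixed in the introduction. Inverting this sequence produces a symplectic cobordism $(W,\omega_W)$ from $(Y,\xi_r)$ on the concave side to $(S^3,\xi_{std})$ on the convex side: when $r=1/n$ the decomposition uses only contact $(+1)$--surgeries, so the inverse is a purely Weinstein cobordism, while for general $r>0$ symplectic blow-ups may have to be introduced to compensate for the contact $(-1)$--surgery pieces appearing in the forward decomposition. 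These blow-ups account for the blow-ups of $B^4$ allowed by the theorem.

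Gluing $X':=X\cup_Y W$ then produces a strong symplectic filling of $(S^3,\xi_{std})$, so by McDuff's classification $(X',\omega')$ is symplectomorphic to $(B^4,\omega_{std})$ blown up at some number of interior points. To extract the symplectic disk, single out the $2$--handle in $W$ whose attachment inverts the initial contact $(+1)$--surgery on $L$ itself---this is the final handle in the inversion, whose convex top is $(S^3,\xi_{std})$. Its co-core is a Lagrangian disk $\Delta\subset W\subset X'$ whose boundary, by the standard duality between contact $(\pm 1)$--surgeries, is Legendrian isotopic to $L$ in $\partial X'=S^3$. Applying the standard perturbation (as in \cite{CaoGallupHaydenSabloff14}) in a neighborhood of the boundary of $\Delta$ then produces the desired symplectic disk in $X'$, with boundary the transverse push-off $T$ of $L$.

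The crux of the argument is constructing the capping cobordism: for $r>0$ not of the form $1/n$, the Ding--Geiges--Stipsicz decomposition typically mixes contact $(+1)$-- and $(-1)$--surgeries, and the naive inversion then produces a non-Weinstein symplectic cobordism. Showing that $(X,\omega)$ can nevertheless be capped off (possibly after a controlled sequence of symplectic blow-ups) in a way that meets McDuff's hypotheses, and that the distinguished Lagrangian co-core described above survives into the enlarged filling, is where the bulk of the technical work lies. The remaining ingredients---symplectic gluing, McDuff's theorem, and the perturbation turning a Lagrangian filling of a Legendrian into a symplectic filling of its transverse push-off---are all standard.
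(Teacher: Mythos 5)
The central step of your argument --- constructing the capping cobordism $W$ from $(Y,\xi_r)$ up to $(S^3,\xi_{std})$ --- has a genuine gap for general $r>0$. Inverting the Ding--Geiges--Stipsicz decomposition works only for the contact $(+1)$--surgery pieces: a contact $(+1)$--surgery is undone by Legendrian surgery on a push-off, which is a Weinstein $2$--handle attached to the convex boundary. But for $r$ not of the form $1/n$ the decomposition also contains contact $(-1)$--surgeries on negatively stabilized push-offs, and undoing one of those requires realizing a contact $(+1)$--surgery as a symplectic cobordism attached along the convex end; no such construction exists in general --- indeed the content of Theorem~\ref{thm1} is precisely that contact $(+1)$--surgery usually destroys fillability. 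Your suggestion that ``symplectic blow-ups may have to be introduced to compensate for the contact $(-1)$--surgery pieces'' cannot work as stated: blow-ups are performed in the interior and do not change the contact structure on the boundary, so they cannot substitute for a missing cobordism. (The blow-ups in the statement of the theorem arise for a different reason: McDuff's classification says a weak filling of $S^3$ is a blow-up of $B^4$.)

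The paper avoids this entirely by capping off with a single handle attached along a \emph{transverse} knot rather than by inverting the Legendrian surgery diagram. The surgery solid torus contains a transverse knot $T$ isotopic to its core whose standard neighborhoods realize all slopes clockwise of $r$ and counterclockwise of $0$; in particular the meridional slope $\infty$ of the original $L$ is admissible, so admissible transverse surgery on $T$ with slope $\infty$ returns one to $S^3$ in a single step, for any $r$. Gay's theorem (Theorem~\ref{gaythm}) realizes this as a symplectic $2$--handle attachment producing a weak filling of $S^3$, Wendl's refinement makes the co-core of that handle a symplectic disk whose boundary is the transverse push-off of $L$, and McDuff then identifies the total space with a blow-up of $B^4$. (Two further points the paper must handle and you elide: reducing rational $r$ to the integer case via Theorem~\ref{BEtheorem}, and the case $tb(L)+n=0$, where the filling cannot be perturbed to be strong and one must first pass to contact $(n+1)$--surgery.) Your approach does recover the statement when $r=1/n$, since then the decomposition consists only of $(+1)$--surgeries, but for the remaining values of $r$ the cobordism you need is not constructed.
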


We note that in fact any smooth knot bounds a smooth disk in $B^4$ blown-up some number of times, and this geometric observation can be turned into an effective lower bound on the framing of surgeries that result in a symplectically fillable contact structure. To state the result, we first define a function $f:\N\to \Z$ by letting $f(t)$ be the minimum of $\sum d_i^2$ over finite tuples $(d_1, \ldots, d_m)$ of non-negative integers satisfying $$\sum \left(d_i^2 - d_i\right) \geq 2t.$$  We do not have a closed-form description of $f$, but one can work out that $f$ takes the values $0$, $4$, $8$, $9$, $13$, and $16$ for $t = 0,1, 2, 3, 4,$ and $5$ respectively.  It is not hard to see that if we allow real values for $d_i$, then the minimum of $\sum d_i^2$ occurs when $m=1$ and $d_1 = d$, and satisfies $d^2 - d = 2t$. Thus, 
\[
f(t) \geq 2t + \lceil d \rceil = 2t + \left\lceil\frac{\sqrt{8t+1}+1}2\right\rceil.
\]

\begin{theorem}\label{taubound}
Let $L$ be a Legendrian knot in the standard contact 3--sphere $(S^3, \xi_{std})$ and let $\tau(L)$ be the Heegaard Floer $tau$-invariant.  If $\tau(L)\geq 0$ then contact $(r)$--surgery on $L$ is not symplectically fillable for any $r\leq f(\tau(L))-tb(L)-1$.
\end{theorem}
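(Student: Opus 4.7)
My plan is to apply Theorem~\ref{blowupdisk} to replace the fillability hypothesis by the existence of a symplectic disk in a blown-up $B^4$, and then extract two complementary inequalities on the intersection numbers of that disk with the exceptional spheres.

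Assume for contradiction that contact $(r)$-surgery on $L$ is symplectically fillable and that $r \le f(\tau(L))-tb(L)-1$. By Theorem~\ref{blowupdisk}, the transverse push-off $T_L$ bounds a symplectic disk $D$ in $\widehat{B^4} := B^4 \# k\,\overline{\mathbb{CP}{}^2}$ for some integer $k\ge 0$; write $d_i := [D] \cdot [E_i]$, where $E_1,\dots,E_k$ are the exceptional spheres. Choosing an $\omega$-compatible almost complex structure $J$ for which $D$ and each $E_i$ are simultaneously $J$-holomorphic, positivity of intersections gives $d_i\ge 0$.

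The first inequality I would prove is $\sum_i (d_i^2-d_i) \ge 2\tau(L)$. To see this, symplectically blow down the spheres $E_i$ one at a time. The disk $D$ becomes a symplectically immersed disk in $B^4$ whose only singularities are ordinary $d_i$-fold points at the image of each $E_i$ (the $d_i$ sheets of $D$ meeting $E_i$ transversely collapse to one point). Each such multiple point contributes $\binom{d_i}{2}$ to the delta invariant, so smoothly resolving all the singularities produces an embedded surface in $B^4$ of genus $\sum_i \binom{d_i}{2}$ bounding the underlying knot type of $L$. The Ozsv\'ath--Szab\'o bound $\tau(L)\le g_4(L)$ then gives the desired inequality.

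The second, complementary inequality is $\sum_i d_i^2 \le r + tb(L)$, and this is the main obstacle. I would extract it from the explicit construction of the disk inside the proof of Theorem~\ref{blowupdisk}: one caps the given filling $W$ of contact $(r)$-surgery by attaching symplectic $2$--handles (and possibly performing symplectic blow-ups) whose net boundary effect undoes the contact $(r)$-surgery, producing a symplectic filling of $(S^3, \xi_{std})$ which, by uniqueness of such fillings up to blow-up, is diffeomorphic to $\widehat{B^4}$. The disk $D$ is the co-core of the $2$--handle dual to the original surgery, so its self-intersection $[D]^2 = -\sum_i d_i^2$ is controlled, via the relation between disk self-intersection and boundary framing, by the smooth surgery coefficient $r + tb(L)$. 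One expects the equality $\sum_i d_i^2 = r + tb(L)$; this is consistent with the Lagrangian-disk case of Theorem~\ref{thm1} ($r=1$, $tb=-1$, $\sum d_i^2 = 0$) and with the Legendrian $(2,3)$-torus knot case from Proposition~\ref{torusfill} ($r=3$, $tb=1$, and $d_1 = 2$ gives $\sum d_i^2 = 4$).

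Combining these: the first inequality together with the minimizing definition of $f$ yields $\sum_i d_i^2 \ge f(\tau(L))$, while the second yields $\sum_i d_i^2 \le r+tb(L)$. Hence $r+tb(L) \ge f(\tau(L))$, i.e., $r \ge f(\tau(L))-tb(L)$, which strictly contradicts the hypothesis $r \le f(\tau(L))-tb(L)-1$.
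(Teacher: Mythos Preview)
Your overall architecture is exactly the paper's: produce the symplectic disk $D$ in $B^4\#m\overline{\C P}^2$ via Theorem~\ref{blowupdisk}, bound $\sum d_i^2$ from below by $f(\tau(L))$ using the constraint $2\tau(L)\le \sum(d_i^2-d_i)$, and bound $\sum d_i^2$ from above by the smooth surgery coefficient. Two points deserve comment.

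\emph{The $\tau$ inequality.} Your derivation (blow down the exceptional spheres, resolve the resulting $d_i$--fold points to obtain an embedded genus--$\sum\binom{d_i}{2}$ surface in $B^4$, then invoke $\tau\le g_4$) is correct and pleasantly geometric, but it is not what the paper does. The paper applies directly the Ozsv\'ath--Szab\'o inequality for surfaces in negative--definite $4$--manifolds with $S^3$ boundary, namely $2\tau(K)+|[\Sigma]|+[\Sigma]\cdot[\Sigma]\le 2g(\Sigma)$, to the \emph{embedded} disk $D$ in $B^4\#m\overline{\C P}^2$. With $g(D)=0$ and $e_i\cdot e_i=-1$ this gives $2\tau(L)\le\sum(d_i^2-|d_i|)$ in one stroke, without blowing down, without positivity of intersections, and without any symplectic input. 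Your route buys a more hands-on picture; the paper's buys brevity and avoids the need to arrange $D$ and the $E_i$ to be simultaneously $J$--holomorphic.

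\emph{The framing equality.} What you call the ``main obstacle'' is in fact established cleanly, and your sketch is the right one. The paper computes that if $[D]=\sum d_ie_i$ then removing a tubular neighborhood of $D$ from $B^4\#m\overline{\C P}^2$ yields on the boundary the smooth $(\sum d_i^2)$--surgery on $\partial D$; since that complement is (a blow-up of) the original filling of contact $(n)$--surgery, one gets the exact equality $\sum d_i^2 = tb(L)+n$. Note the coefficient is $tb(L)+n$ with $n=\lceil r\rceil$ (possibly increased by one if $tb(L)+n=0$), not $tb(L)+r$; your claimed equality $\sum d_i^2=r+tb(L)$ is literally false for non-integral $r$. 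This does not affect the conclusion because $f(\tau(L))-tb(L)-1$ is an integer, so $r\le f(\tau(L))-tb(L)-1$ forces $n\le f(\tau(L))-tb(L)-1$ as well, and hence $\sum d_i^2=tb(L)+n<f(\tau(L))$, contradicting the lower bound.
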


Note that we do not claim that contact $(r)$--surgery is symplectically fillable when $r > f(\tau) - tb(L) - 1$.  This obstruction is best seen as a smooth one, since $tb(L) + r$ is the smooth surgery coefficient corresponding to contact $(r)$--surgery on $L$.

We notice that although for many Legendrian knots, even stabilized ones, sufficiently large contact surgeries are tight, there are also knot types with no tight positive contact surgery on any Legendrian representative, such as the figure-eight knot \cite{Conway16preprint}. 

The above observations lead us to the following questions.
\begin{question}
Given a Legendrian knot $L$ in $(S^3,\xi_{std})$ for which some positive contact surgeries are tight, are sufficiently large positive contact surgeries fillable?
\end{question}

\begin{question}\label{satisfytau}
Does there exist for each $\tau \in \N$ a Legendrian knot $L$ in $(S^3, \xi_{std})$ with $\tau(L) = \tau$ such that contact $(r)$--surgery on $L$ is strongly symplectically fillable for $r \geq f(\tau) - tb(L)$?  What about $r > f(\tau) - tb(L) - 1$?
\end{question}

Proposition~\ref{torusfill} implies that the unknot, the $(2,3)$--torus knot, and the $(2, 5)$--torus knot give a positive answer to Question~\ref{satisfytau} for $\tau = 0$, $1$, and $2$ for $r \geq f(\tau) - tb(L)$.

It is also interesting to consider positive contact surgeries in other contact manifolds.
\begin{question}
Given a tight contact manifold, what criteria will determine whether contact $(r)$--surgery for $r > 0$ is fillable?
\end{question}

We note that this question for $r=1$ seems related to the classification of fillings of a contact manifold. Specifically, our proof of Theorem~\ref{thm1} clearly shows that the result of contact $(1)$--surgery on the Legendrian $L$ in $(M,\xi)$ will be strongly symplectically fillable if and only if $L$ bounds a Lagrangian disk in some symplectic filling of $(M,\xi)$. To get a more precise statement one needs to know something about the fillings of $(M,\xi)$. 

For example, it is known that all the contact structures on $L(p,1)$ have a unique Stein filling, given by a disk bundle $X_{-p}$ over $S^2$ with Euler number $-p$, except $L(4,1)$ with the universally tight  structure has two fillings, one by the disk bundle $D_{-4}$ and one by a rational homology ball $B_4$, \cite{McDuff90, PlamenevskayaVanHornMorris10}. It is also well-known that all these contact structures are supported by planar open books, {\em cf} \cite{Etnyre04b}. A result of Wendl \cite{Wendl10} says that any minimal symplectic filling of a contact structure supported by a planar open book can be deformed into a Stein filling. The arguments in the proof of Theorem~\ref{thm1} yield the following result. 
\begin{theorem}\label{lens}
Let $L$ be a Legendrian knot in a lens space $L(p,1)$ with tight contact structure $\xi$.  When $\xi$ is not the universally tight contact structure on $L(4,1)$ then the result of contact $(+1)$--surgery on $L$ bounds a symplectic manifold if and only if $L$ bounds a Lagrangian disk in the disk bundle $X_{-p}$ (with the unique Stein structure filling $(L(p,1),\xi)$). When $\xi$ is the universally tight contact structure on $L(4,1)$ then the result of contact $(+1)$--surgery on $L$ bounds a symplectic manifold if and only if $L$ bounds a Lagrangian disk in the disk bundle $X_{-p}$ (with the unique Stein structure filling $(L(p,1),\xi)$) or the rational homology ball $B_{-4}$.
\end{theorem}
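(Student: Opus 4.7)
The plan is to extract the following general equivalence from the proof of Theorem~\ref{thm1} and apply it to lens spaces: contact $(+1)$--surgery on a Legendrian knot $L$ in a contact $3$--manifold $(M,\xi)$ admits a strong symplectic filling if and only if $L$ bounds a Lagrangian disk in some strong symplectic filling of $(M,\xi)$. As observed in the paragraph preceding Theorem~\ref{lens}, this is immediate from the proof of Theorem~\ref{thm1}, whose arguments for $r=1$ are entirely local around the Legendrian knot and its Lagrangian disk: one direction removes a standard Weinstein neighborhood of the Lagrangian disk from the ambient filling, and the other attaches to the filling of the surgered manifold the symplectic cobordism that inverts contact $(+1)$--surgery and identifies its co-core as a Lagrangian disk with boundary $L$. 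Neither step uses any feature specific to $(S^3,\xi_{std})$, so the equivalence carries over verbatim to $(M,\xi)=(L(p,1),\xi)$.

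Granting this equivalence, the theorem reduces to classifying which strong symplectic fillings of $(L(p,1),\xi)$ can house a Lagrangian disk bounded by $L$. The next step is to pass to minimal fillings: any exceptional sphere in a non-minimal filling can be made disjoint from the (half-dimensional, totally real) Lagrangian disk by a small perturbation and then blown down without disturbing the disk, so we may assume from the start that the filling is minimal. Since every tight contact structure on $L(p,1)$ is supported by a planar open book (cited in the introduction), Wendl's theorem then identifies any minimal strong symplectic filling with a symplectic deformation of a Stein filling. Finally, the classification of Stein fillings of lens spaces due to McDuff and Plamenevskaya--Van Horn-Morris, recalled in the introduction, says that the unique Stein filling of $(L(p,1),\xi)$ is $X_{-p}$, with the sole exception of the universally tight $\xi$ on $L(4,1)$, which is additionally filled by the rational homology ball $B_{-4}$. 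Combining these three ingredients with the equivalence above yields both implications of the theorem.

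The step requiring the most care is checking that the surgery cobordism half of the key equivalence really is local, i.e.\ that the model Weinstein handle and the identification of its co-core as a Lagrangian disk behave identically in $L(p,1)$ as in $S^3$; this is expected since the construction happens in a tubular neighborhood of $L$, but it should be stated explicitly. A secondary technical point is the reduction to minimal fillings, which uses the standard fact that generic Hamiltonian perturbations move an exceptional sphere off a Lagrangian disk. Beyond these, the argument is essentially an assembly of Theorem~\ref{thm1}, Wendl's theorem, and the known filling classification for lens spaces.
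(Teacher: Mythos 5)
Your proposal is correct and follows essentially the same route the paper intends: the paper gives no separate proof of this theorem, but the paragraph preceding it lays out exactly your ingredients --- the general equivalence extracted from the proof of Theorem~\ref{thm1}, planarity of the open books, Wendl's deformation result for minimal fillings, and the McDuff / Plamenevskaya--Van Horn-Morris classification of Stein fillings of $L(p,1)$. The only cosmetic difference is in handling minimality: rather than perturbing exceptional spheres off the Lagrangian disk, the paper's scheme blows down the filling of the surgered manifold first and then invokes Lemma~\ref{minimal} to see that attaching the Weinstein $2$--handle preserves minimality, so the disk (the co-core) never needs to be moved.
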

Some of the other results above extend to this case too. 

\medskip
{\bf Acknowledgements:} The authors are grateful to Ko Honda for asking the question that prompted this paper and to the Georgia International Topology Conference for providing a stimulating environment. We would also like to thank Oleg Lazarev for pointing out subtleties in the relation between regular and decomposable Lagrangian submanifolds, and Roger Casals and Lenny Ng for helping us better understand these subtleties. We also thank the two anonymous referees for providing valuable comments that improved the paper. The first author was partially supported by NSF grant DMS-1344991; the second author was partially supported by NSF grant DMS-1608684.

\section{Background}
In this section we review contact surgery on Legendrian knots, Weinstein handle attachments, and admissible surgery on transverse knots. 

\subsection{Contact surgery}\label{surgery}
We refer the reader to \cite{DingGeigesStipsicz04} for details on contact surgery and \cite{Etnyre05, EtnyreHonda01b} for details on Legendrian knots, but briefly recall the relevant features here. 

A Legendrian knot $L$ in a contact 3--manifold $(M,\xi)$ has a standard neighborhood $N$ that is a solid torus with convex boundary. The dividing curves on the boundary of $N$ consist of two curves of slope $tb(L)$. Using Giroux realization, the characteristic foliation on $\partial N$ can be assumed to consist of two lines of singularities (called Legendrian divides) parallel to the dividing curves, and the rest of the foliation is non-singular and consists of curves of any slope but the dividing slope (these are called ruling curves). We note that each Legendrian divide is Legendrian isotopic to $L$ in $N$. Moreover, $\partial N$ has a neighborhood in $N$ of the form $(\partial N)\times [1/2,1]$, with $\partial N=(\partial N)\times\{1\}$ so that the contact structure is invariant in the $[1/2,1]$ direction. Thus we have an interval's worth of copies of the Legendrian divide. Any one of these will be called a \dfn{Legendrian push-off} of $L$. 

Given the above set-up, we can remove $N$ from $M$ and then glue in a solid torus $S^1\times D^2$ so that the meridian is mapped to a curve of slope $1/n$ on $\partial \overline{(M-N)}$.  Here we are measuring slopes with respect to the framing coming from the contact planes (so with respect to a Seifert framing, this slope is $tb(L)+1/n$). By \cite{Giroux00, Honda00a} there is a unique tight contact structure on $S^1\times D^2$ that extends the contact structure $\xi_{\overline{M-N}}$. We call the resulting contact structure the result of \dfn{contact $(1/n)$--surgery on $L$}.

It is well-known and easy to show that contact $(1/n)$--surgery on $L$ for $\pm n>1$ is equivalent to contact $(\pm 1)$--surgery on $|n|$ parallel push-offs of $L$. Moreover, the result of contact $(\pm 1)$--surgery on a Legendrian push-off of $L$ cancels the result of contact $(\mp 1)$--surgery on $L$.

We can also perform contact $(r)$--surgery on $L$ for any $r \neq 0$, but when $r\not=1/n$, there is not a unique choice for the contact structure on the resulting manifold (there are choices for the extension of $\xi_{M-N}$ over the surgery solid torus). In these cases, we will always consider the tight contact structure on the torus with maximally negative relative Euler class. This choice agrees with those studied in the Heegaard Floer literature \cite{Golla15, LiscaStipsicz04, MarkTosunPre} and corresponds to \dfn{inadmissible transverse surgery} \cite{Conway14?}. In the language of \cite{DingGeigesStipsicz04} we choose all stabilizations of the Legendrian knots to be negative.  For example, contact $(n)$--surgery on $L$, for an integer $n>1$, is achieved by taking a push-off of $L$ and stabilizing it once negatively to get $L'$, then do contact $(+1)$--surgery on $L$, and $(-1)$--surgery on $L'$ and on $n-2$ Legendrian push-offs of $L'$. 

\subsection{Weinstein handles} \label{whandle}
In several of our constructions below, we will need a very careful description of Weinstein handles. Details can be found in the original paper \cite{Weinstein91} or the recent book \cite{CieliebakEliashberg12}. We will restrict our attention to the 4--dimensional setting.

Models for the handles will all be contained in $\R^4$ with the symplectic structure $\omega=dx_1\wedge dy_1+ dx_2\wedge dy_2$.

{\em Weinstein 1--handles:} Consider the hypersurfaces 
\[
S_1=\{x_1^2+x^2_2+y_1^2-y_2^2=-1\}\text{ and } S_2=\{x_1^2+x^2_2+y_1^2-\delta y_2^2=\epsilon\}
\]
where $\epsilon$ and $\delta$ are positive.
The Weinstein 1--handle $H_1$ is the closure of the region of $\R^4-(S_1\cup S_2)$ that contains the origin. The core of the handle is $C=H_1\cap \{x_1=x_2=y_1=0\}$. It should be clear that $C$ is an isotropic submanifold. The attaching sphere for the handle will be $a=C\cap S_1$. We also set $A=S_1\cap H_1$, and call it the attaching region. Notice that by an appropriate choice of $\epsilon$ and $\delta$, we can arrange for $A$ to be contained in any given neighborhood of $a$ in $S_1$. 

The vector field $v=\frac 12 x_1\partial_{x_1}+2 x_2\partial_{x_2}+\frac 12 y_1\partial_{y_1}-y_2\partial_{y_2}$  is a Liouville vector field for $\omega$.  Since it is transverse to both $S_1$ and $S_2$ --- pointing into $H_1$ along $A$ and out of $H_1$ on the rest of the boundary --- there is an induced contact structure on $A$ and $\overline{(\partial H_1) - A}$. 

\begin{figure}[h]
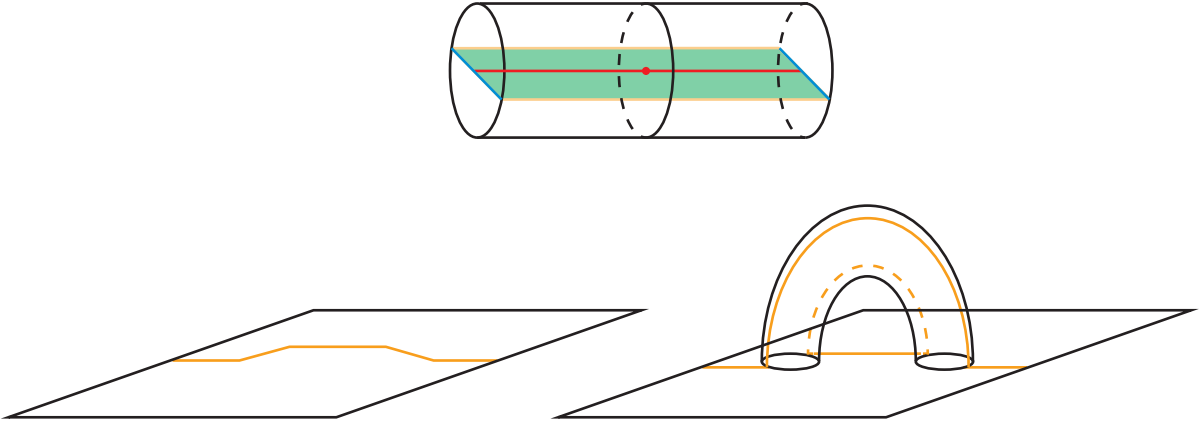

\begin{overpic}
{attach1}
\end{overpic}
\caption{Schematic view of attaching a Weinstein 1--handle. The top figure shows the 1--handle $D^1\times D^3$ (we can only show $D^2$ instead of $D^3$). The core $C$ is shown in red and the Lagrangian $\Lambda$ is shown in green. The Liouville field is pointing in on the right and left face of the handle and out on the other face; it is also tangent to $\Lambda$. On the bottom left is the boundary of $X$ with the Legendrian knot $L$ shown in orange. On the right the 1--handle has been attached and the Legendrian link $L'$ is shown in orange. If $L$ bounded a Lagrangian $\Lambda_L$ in $X$ then $L'$ bounds a Lagrangian obtained from $\Lambda_L$ by attaching the 2-dimensional 1--handle $\Lambda$.}
\label{decomp}
\end{figure}  

Given a symplectic 4--manifold $(X,\omega)$ with convex boundary and two points in $\partial X$ we can find a contactomorphism from $A$ to a neighborhood of the points and use this to attach the handle $H_1$ to $X$, resulting in a new symplectic manifold with convex boundary. We will need a relative version of this handle attachment as well. To this end notice that $\Lambda=\{x_1=x_2=0\}\cap H_1$ is a Lagrangian disk and $A'=\Lambda\cap A$ is the union of two Legendrian arcs, as is $B'=\Lambda\cap ((\partial H_1)-A)$. Given a Legendrian link $L$ in $\partial X$ bounding a Lagrangian submanifold $\Lambda_L$ in $X$ that is transverse to $\partial X$, and given any two points on $L$, there is a contactomorphism from $A$ to a neighborhood of these points taking $A'$ to arcs on $L$ containing the points. We can use this contactomorphism to attach a Weinstein 1--handle to $X$ to obtain $X'$ and simultaneously attach the Lagrangian 1--handle $\Lambda$ to $\Lambda_L$. The boundary of this new Lagrangian in $\partial X'$ is a Legendrian link $L'$ obtained from $L$ by surgery on the two points.

{\em Weinstein 2--handles:} Consider the hypersurfaces 
\[
S_1=\{x_1^2+x^2_2-y_1^2-y_2^2=-1\}\text{ and } S_2=\{x_1^2+x^2_2-\delta y_1^2-\delta y_2^2=\epsilon\}
\]
where $\epsilon$ and $\delta$ are positive.
The Weinstein 2--handle $H_2$ is the closure of the region of $\R^4-(S_1\cup S_2)$ that contains the origin. The core of the handle is $C=H_2\cap \{x_1=x_2=0\}$ and the co-core is $K=H_2\cap \{y_1=y_2=0\}$. It should be clear that $C$ and $K$ are Lagrangian submanifolds. The attaching sphere for the handle will be $a=C\cap S_1$. We also set $A=S_1\cap H_1$, and call it the attaching region. Notice that by an appropriate choice of $\epsilon$ and $\delta$, we can arrange for $A$ to be contained in any given neighborhood of $a$ in $S_1$. 

The vector field $v=2 x_1\partial_{x_1}+2 x_2\partial_{x_2}- y_1\partial_{y_1}-y_2\partial_{y_2}$  is a Liouville field for $\omega$.  Since it is transverse to both $S_1$ and $S_2$ --- pointing into $H_1$ along $A$ and out of $H_1$ on the rest of the boundary --- there is an induced contact structure on $A$ and $\overline{(\partial H_1) - A}$. 

Given a symplectic 4--manifold $(X,\omega)$ with convex boundary and a Legendrian knot $L$ in $\partial X$, we can find a contactomorphism from $A$ to a neighborhood of $L$, and use this contactomorphism to attach the 2--handle $H_2$ to $X$ to get a new symplectic manifold $X'$ with convex boundary. It is well-known and easy to check that $\partial X'$ is obtained from $\partial X$ by contact $(-1)$--surgery on $K$, \cite{Gompf98}.

We note that the boundary of the co-core $\partial K$ is a Legendrian knot in $\partial X'$ and is isotopic to a Legendrian push-off of $L$. Moreover, inside any open set containing $K$ in $X'$, one may find a neighborhood of $K$ that, when removed from $X'$, results in a symplectic manifold $X''$ with convex boundary. Clearly $X''$ is diffeomorphic to $X$ (although the symplectic structure has been deformed). It is well-known and easy to check that $\partial X''$ is obtained from $\partial X'$ by contact $(+1)$--surgery on $\partial K$. 

To prove our main theorem about $(+1)$--contact surgeries we need the following simple observation (this result originally appeared in \cite{DingLiPreprint}, attributed to the second author of this paper). 
\begin{lemma}\label{minimal}
Let $(X,\omega)$ be a strong symplectic filling of the contact manifold $(M,\xi)$ and $(X',\omega')$ the result of attaching a Weinstein 2--handle. If $(X,\omega)$ is minimal, then so is $(X',\omega')$. 
\end{lemma}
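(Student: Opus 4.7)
The plan is to argue by contradiction: suppose $(X', \omega')$ is not minimal, so there is a symplectically embedded 2-sphere $S \subset X'$ with $[S]^2 = -1$; the goal is to produce such a sphere inside $(X, \omega)$. The key geometric object is the Lagrangian co-core disk $K \subset H_2$, whose boundary is a Legendrian push-off of the attaching knot $L$ in $\partial X'$. From the model in Section~\ref{whandle}, the Liouville vector field $v$ on $H_2$ is tangent to $K$ (which is the unstable manifold of the unique critical point), points into $H_2$ along the attaching region $A$, and exits through $\partial H_2 \setminus A$. Flowing backward by $v$ retracts $H_2 \setminus \nu(K)$ onto $A \subset \partial X$, and this retraction can be realized as a symplectomorphism $(X' \setminus \nu(K), \omega') \cong (X, \omega)$. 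It therefore suffices to arrange $S \cap \nu(K) = \emptyset$, for then $S$ descends to a symplectic exceptional sphere in $(X, \omega)$.

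To arrange this disjointness, I would choose a compatible almost complex structure $J$ on $(X', \omega')$ that is cylindrical in a collar of $\partial X \subset X'$ and, on $H_2$, modeled on the standard integrable structure of $\C^2$ so that $K$ is totally real. McDuff's theorem on exceptional classes provides a unique embedded $J$-holomorphic representative $S_J$ of $[S]$. Next, perform a neck-stretching of $J$ along $\partial X \subset X'$: by the SFT compactness theorem, $S_J$ degenerates to a pseudoholomorphic building with components in the symplectic completion of $(X, \omega)$, in the symplectization of $\partial X$, and in the Weinstein handle $H_2$, matched along Reeb orbits and chords. Since $\omega'|_{H_2}$ is exact, the finite-energy components in $H_2$ are controlled by their asymptotic Reeb data, and a careful analysis of the Weinstein handle (using that $K$ is the unstable manifold of the Liouville flow and that $J$ is integrable and standard there) rules out any such component meeting $K$. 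The resulting limit building therefore lives entirely in the completion of $(X, \omega)$, and gluing (or identifying a single closed component directly) produces a symplectic exceptional sphere in $(X, \omega)$, contradicting minimality.

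The main obstacle is the neck-stretching analysis together with the verification that no finite-energy $J$-holomorphic component in $H_2$ intersects the Lagrangian co-core $K$; this is the only analytically delicate point, and it is what makes the specific Weinstein geometry of the 2-handle essential rather than just the topology of a 2-handle attachment. Once this step is granted, minimality of $(X, \omega)$ transfers to $(X', \omega')$ through the symplectic identification $(X' \setminus \nu(K), \omega') \cong (X, \omega)$.
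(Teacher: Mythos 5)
Your proposal takes a genuinely different route from the paper, and the step you yourself flag as the ``main obstacle'' is a real gap, not a routine verification. Two concrete problems. First, the identification $(X'\setminus\nu(K),\omega')\cong(X,\omega)$ is not a symplectomorphism: flowing backward along the Liouville field is only conformally symplectic, and the paper is explicit in Section~\ref{whandle} that removing a neighborhood of the co-core returns a manifold \emph{diffeomorphic} to $X$ on which the symplectic structure has been \emph{deformed}. Minimality of a filling is not obviously a deformation invariant, so even a symplectic $(-1)$--sphere in $X'\setminus\nu(K)$ does not immediately yield one in $(X,\omega)$. Second, and more seriously, the neck-stretching analysis is not carried out, and even granting that no finite-energy component in the handle meets $K$, the conclusion does not follow: after stretching, the limit is a holomorphic building whose levels in the completion of $X$ are punctured curves asymptotic to Reeb orbits of $(M,\xi)$. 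To extract a closed exceptional sphere inside $X$ you must rule out \emph{all} breaking --- no symplectization levels and no components in the handle at all --- not merely intersections with the co-core. Nothing in the proposal addresses this, and it is exactly where the work would lie.

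The paper's proof avoids all of this machinery with a maximum-principle argument. Let $N$ be a collar of $\partial X$ in $X$ symplectomorphic to a piece of the symplectization; the union $N'$ of $N$ with the handle is a Stein cobordism, hence carries a plurisubharmonic function $\phi$ whose level sets include the two boundary components, and one chooses $J$ compatible with $\omega'$ and complex on $N'$. A pseudoholomorphic exceptional sphere $S$ in $X'$ either misses $N'$ --- hence lies in $X$, contradicting minimality of $(X,\omega)$ --- or meets it, in which case $S\cap N'$ is a holomorphic curve with boundary only on the \emph{lower} level set of $\phi$ (since $S$ is closed and cannot touch $\partial X'$), forcing an interior maximum of $\phi$ and contradicting plurisubharmonicity. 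No removal of $K$, no neck stretching, and no SFT compactness is needed; if you want to repair your argument, the maximum principle is also the tool that confines the sphere to $X$ directly.
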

\begin{proof}
Since $(X,\omega)$ has a strongly convex boundary, there is a neighborhood $N$ of $M$ in $X$ that is symplectomorphic to a piece of the symplectization of $(M,\xi)$. The Weinstein 2--handle is attached to $N$ to create a manifold $N'\subset X'$. One can choose an almost complex structure on $X'$ that is compatible with $\omega'$ and is a complex structure on $N'$, that is, $N'$ can be taken to be a Stein cobordism, \cite{CieliebakEliashberg12}, and thus there is a pluri-subharmonic function $\phi:N'\to \R$ with the boundary components of $N'$ being level sets of $\phi$. 

Now assume that $(X',\omega')$ is not minimal. Then there is a pseudo-holomorphic sphere $S$ in $X'$, \cite{Taubes96}. If $S$ is disjoint from $N'$ then it lies entirely in $X$, which contradicts the minimality of $(X,\omega)$. Thus $S$ must non-trivially intersect $N'$. We may assume that $S$ intersects the lower boundary of $N'$ transversely, 
so $S\cap N'$ is a pseudo-holomorphic embedding in $N'$ with boundary in the lower boundary component of $N'$. Thus, there will be a maximum of $\phi$ on $S$ at an interior point of $S\cap N'$. This contradicts the fact that $\phi$ is pluri-subharmonic. Thus $(X', \omega')$ is minimal. 
\end{proof}

\subsection{Transverse surgery}\label{transsurg}
As an auxiliary tool in our study of surgeries on Legendrian knots we need to consider surgeries on transverse knots. We briefly recall admissible transverse surgery here, see \cite{BaldwinEtnyre13} for more details.

Let $T$ be a transverse knot in a contact 3--manifold $(M,\xi)$, and fix a framing of $T$ against which we will measure all our slopes. There is a neighborhood $N$ of $T$ in $M$ and some $R<\pi$ such that $N$ is contactomorphic to $S^1\times D^2_R$ with the contact structure $\xi_{cyl}=\ker (\cos r\, d\phi + r \sin r\, d\theta)$, where $D^2_R$ is a disk of radius $R$, and where the fixed framing is taken to the product structure on $S^1\times D^2_R$.  If $\frac pq<\frac{\cos R}{R\sin R}$, then we call $\frac pq$ an \dfn{admissible slope}, and there is a unique radius $r_0$ such that the torus $T_{p/q}=\{r=r_0\}$ in $S^1\times D^2$ has characteristic foliation of curves of slope $\frac pq$ (that is, $p$ meridians and $q$ longitudes, using the fixed framing). Removing the interior of the solid torus that $T_{p/q}$ bounds and collapsing the leaves of the characteristic foliation on $T_{p/q}$ (technically this is performing a contact cut) will result in a manifold $M_{p/q}$ obtained from $M$ by $\frac pq$ surgery on $T$ that supports a contact structure $\xi_{p/q}$ such that $\xi_{p/q}$ agrees with $\xi$ on the complement of $T'$, which is the image of $T_{p/q}$ in $M_{p/q}$. Notice that $T'$ is a transverse knot in $(M_{p/q},\xi_{p/q})$. We say $(M_{p/q},\xi_{p/q})$ is obtained from $(M,\xi)$ by \dfn{admissible transverse surgery on $T$ with slope $\frac pq$}.

In \cite{Gay02a}, David Gay showed how to attach symplectic 2--handles along transverse knots. 
\begin{theorem}[Gay 2002, \cite{Gay02a}]\label{gaythm}
Let $(X,\omega)$ be a symplectic manifold with strongly convex boundary. Let $T$ be a transverse knot in $\partial X$, and let $F$ be a framing of $T$.  If $0$ is an admissible slope (when using $F$ as the fixed framing), then there is a symplectic structure on the manifold $Y$ obtained from $X$ by attaching a 2--handle to $T$ with framing $0$, that has weakly convex boundary, and the contact structure on $\partial Y$ is obtained from the contact structure on $\partial X$ by admissible transverse surgery on $T$ with slope $0$.
\end{theorem}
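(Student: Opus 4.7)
The plan is to follow Gay's construction in \cite{Gay02a}: build an explicit symplectic model for the 2--handle from a piece of the symplectization of a standard neighborhood of $T$, and attach it to $X$ via a contactomorphism. Since $(X,\omega)$ has strongly convex boundary, a collar of $\partial X$ is symplectomorphic to a piece of the negative symplectization $(\partial X\times(-\epsilon,0],\,d(e^s\alpha))$ of $(\partial X,\xi)$. By the standard neighborhood theorem for transverse knots, a neighborhood of $T$ in $\partial X$ is contactomorphic to the model $N=(S^1\times D^2_R,\,\ker(\cos r\,d\phi+r\sin r\,d\theta))$ with $F$ corresponding to the product framing. The admissibility hypothesis on slope~$0$ guarantees a torus $T_0\subset N$ whose characteristic foliation has slope~$0$ with respect to $F$.

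The heart of the argument is a symplectic cut inside the symplectization $N\times\R_s$, with symplectic form $\omega_{\mathrm{sym}}=d(e^s\alpha)$. Translation in the direction of the characteristic foliation of $T_0$ is a symmetry of $\omega_{\mathrm{sym}}$ and, in a tubular neighborhood of $T_0\times\R_s$, is Hamiltonian with a moment map $\mu$. Take as the handle region $H$ the intersection of such a tubular neighborhood with a sublevel set $\{\mu\le c\}$, bounded below by $N\times\{0\}$ and smoothly glued to $\partial X$ outside a collar of $T_0$. Applying Lerman's symplectic cutting to $H$ along $\{\mu=c\}$ collapses the $S^1$-orbits there by symplectic reduction, producing a smooth symplectic 4--manifold whose upper boundary is, topologically, $N$ with the $r\ge r_0$ solid torus replaced by the contact cut that collapses the characteristic foliation on $T_0$. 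By construction, this upper boundary is precisely the contact manifold described by admissible transverse $0$--surgery in Section~\ref{transsurg}.

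Attaching $H$ to $X$ along the matched contactomorphism yields $(Y,\omega_Y)$. Topologically, $H\cong D^2\times D^2$ is a 2--handle, and since $T_0$ is $F$--parallel to $T$, the framing of the attachment is $F$; hence $Y$ realizes the smooth $0$--framed surgery along $T$ with framing $F$. Outside a neighborhood of the cut locus, the Liouville vector field on $\partial X$ extends into $H$, so strong convexity persists there; across the cut the Liouville structure fails to extend, and one obtains only weak convexity. The required positivity $\omega_Y|_\xi>0$ follows from the fact that symplectic reduction preserves the orientation of the contact planes.

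The main obstacle is the symplectic-cut step: one must verify that the symplectic form extends smoothly across the collapsed locus (which becomes a symplectic 2--sphere, the cocore end of the handle), that the resulting 4--manifold is diffeomorphic to the standard 2--handle attachment with framing~$F$, and that the induced contact structure on the new boundary agrees precisely with the admissible transverse surgery model of Section~\ref{transsurg}. Once these local computations are carried out, the global assembly and the weak-convexity verification are routine.
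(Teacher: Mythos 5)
The paper offers no proof of this statement: it is imported wholesale from Gay \cite{Gay02a}, with the symplectic co-core refinement attributed separately to Wendl \cite{Wendl2013}, so there is nothing internal to compare your argument against. On its own terms your outline has the right shape --- an explicit symplectic handle modeled on a piece of the symplectization of the standard transverse neighborhood, with admissibility of slope $0$ furnishing the torus $T_0$ whose characteristic foliation is to be collapsed, and with the upper boundary identified with the contact cut that defines admissible transverse surgery. But it is a plan rather than a proof, and you say so yourself: the closing paragraph defers exactly the content of the theorem. Showing that the symplectic form extends smoothly over the collapsed locus, that the resulting manifold is the smooth $0$-framed $2$-handle attachment, that $\omega$ stays positive on the new contact planes near the cut (this is where weak versus strong convexity is decided, and it is a computation with the explicit form $d(e^s(\cos r\,d\phi+r\sin r\,d\theta))$, not a consequence of ``symplectic reduction preserves the orientation of the contact planes''), and that the induced contact structure agrees with the admissible-surgery model --- these \emph{are} the theorem, not routine afterthoughts.

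Two details in the sketch are also suspect. First, the locus collapsed by the cut is not a symplectic $2$-sphere: the relevant hypersurface near $T_0\times\R_s$ is a torus times an interval, whose $S^1$-reduction is an annulus, and the distinguished symplectic surface in the finished handle is the co-core \emph{disk} of \cite{Wendl2013}, whose boundary is the image of $T_0$ after the collapse. Second, the moment map for the slope-$0$ rotation is $\mu=e^s\alpha(v)$, which vanishes precisely along $T_0\times\R_s$ where that direction is tangent to the contact planes; the sign of the level $c$ and the side of $T_0$ on which you cut are where the admissibility inequality actually does its work, and the sketch does not engage with this beyond invoking the existence of $T_0$. Until those local computations are carried out, the argument does not establish weak convexity of $\partial Y$ or the identification of its contact structure.
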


In \cite[Theorem~5]{Wendl2013}, Chris Wendl showed that the symplectic structure on $Y$ can be chosen so that the co-core of the 2--handle is a symplectic disk.

When discussing slopes on tori it will be useful to consider the Farey tessellation, see \cite{Honda00a} for more details. Recall this is a tessellation of the unit disk $D^2$ formed as follows. Label $(0,1)$ by $0$, $(0,-1)$ by  $\infty$, and draw a hyperbolic geodesic connecting them. Then for a point on $\partial D^2$ with positive $x$-coordinate that is half-way between two points that are already labeled with $p/q$ and $p'/q'$, we label that point by $(p+p')/(q+q')$ and connect it to the two points by hyperbolic geodesics (here $0=0/1$ and $\infty= 1/0$). Do the same for points with negative $x$-coordinate by considering $\infty=-1/0$. All the rational numbers will show up labeling some point, they will appear in order, and two will be connected by a geodesic if and only if the vectors corresponding to the slopes form a basis for $\Z^2$ (here the vector corresponding to $p/q$ has coordinates $p$ and $q$). Given a torus $T^2$ and a basis for $H_1(T^2)$, the slopes correspond to elements of the first homology. Moreover, if we have chosen a basis for $H_1(S^1\times D^2)$ so that the meridian has slope $r/s$ then the longitudes for the torus will be exactly those slopes that have an edge to $r/s$ in the Farey tessellation. 

It will also be useful to know how to relate neighborhoods of transverse and Legendrian knots. Let $L$ be a Legendrian knot and $N$ a standard neighborhood of $L$ with convex boundary having dividing curves giving the framing $F$.  We will label all slopes using a basis for $H_1(T^2)$ coming from the framing on $N$ --- $0$ corresponds to $F$, and $\infty$ corresponds to the meridional slope of $L$ --- and slopes are represented on $S^1$ as in the Farey tessellation. There is a transverse push-off $L^t$ of $L$ in $N$, see \cite{EtnyreHonda01b}, and $L^t$ will have a neighborhood as above realizing all slopes less than $0$ as characteristic foliations.  Moreover, if one performs contact $(r)$--surgery on $L$ in $N$, then there is a transverse knot $T'$ in the new solid torus $N'$, and the slopes realized by characteristic foliations on boundary-parallel tori in $N'$ have slopes clockwise of $r$ and counterclockwise of $0$.  If we then do admissible transverse surgery on $T'$ with slope $s \in (r, \infty)$, then the resulting contact manifold is the same as the result of contact $s$--surgery on $L$, see \cite{Conway14?}.

\begin{remark} 
Recall that at the end of Section~\ref{surgery}, we specified that in this paper, the contact structure on the result of contact $(r)$--surgery on a Legendrian knot always refers to the contact structure on the surgered manifold which when restricted to the surgery torus has maximally negative relative Euler class. The above properties of $T'$ are true only for this choice.
\end{remark}

Instead of doing admissible transverse surgery on $T'$, Baldwin and Etnyre showed in \cite{BaldwinEtnyre13} that for certain $s$, the same result can achieved via Legendrian surgery.
\begin{theorem}[Baldwin--Etnyre, \cite{BaldwinEtnyre13}]\label{BEtheorem}
Let $T' \subset N'$ come from contact $(r)$--surgery on $L$, with notation as above.  Let $s_0$ be the slope the furthest clockwise of $r$ such that $s_0$ is counter-clockwise of $0$ and has an edge to $r$ in the Farey tessellation.  Then for any $s < s_0$, there exists a Legendrian link in $N'$ such that the result of Legendrian surgery on each component of the link is the same as the result of contact $(s)$--surgery on $L$.
\end{theorem}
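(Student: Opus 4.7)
The plan is to combine the correspondence (already discussed in the excerpt) between contact $(s)$--surgery on $L$ and admissible transverse surgery on $T'$ with the fact that, for slopes in the appropriate range, admissible transverse surgery can be factored into a sequence of contact $(-1)$--surgeries on Legendrian knots. The hypothesis $s < s_0$ is exactly what guarantees that this factorization can be carried out inside $N'$.

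First I would recall from the discussion preceding the theorem that if $s$ is clockwise of $r$ and counter-clockwise of $0$, then contact $(s)$--surgery on $L$ agrees with admissible transverse surgery on $T'$ with slope $s$. Since $s_0$ is an edge-neighbor of $r$ with $s_0$ counter-clockwise of $0$, and $s$ lies clockwise of $s_0$ (but still counter-clockwise of $0$), the surgery of interest is genuinely admissible. Next, inside $N'$ I would fix a standard neighborhood $U$ of $T'$ (in the model $S^1 \times D^2_R$ from Section~\ref{transsurg}) whose convex boundary realizes dividing slope $s_0$; this exists precisely because $s_0$ is an admissible slope. Admissible transverse surgery with slope $s$ then amounts to discarding a smaller concentric solid torus with boundary slope $s$ and performing a contact cut, leaving behind the layer $L_0 = U \setminus (\text{interior smaller torus})$, which is a $T^2 \times I$ layer with boundary slopes $s_0$ and $s$.

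The key step is then to factor this $T^2 \times I$ layer. Because $s_0$ and $s$ are both counter-clockwise of $\infty$ and lie on the same arc of the Farey circle, the continued-fraction expansion of $s$ based at $s_0$ provides a finite sequence of intermediate slopes $s_0, s_1, \dots, s_k = s$, each consecutive pair connected by an edge in the Farey tessellation. This decomposes $L_0$ into a stack of basic slices, and by the standard correspondence (see e.g.\ Honda's work on convex decomposition and bypasses), each basic slice can be realized by attaching a bypass, which in turn corresponds to contact $(-1)$--surgery on a suitable Legendrian knot sitting in $N'$. Concatenating these Legendrian surgeries from the outer boundary of $L_0$ inward, together with the contact cut along the innermost torus (which, by the choice of $s_0$ so that the cut torus has two dividing curves of slope $s$ with $s$ meeting $\infty$ in a standard way, collapses to a Legendrian divide without requiring further stabilization), yields a Legendrian link $\Lambda \subset N'$ whose Legendrian surgery produces the desired contact $(s)$--surgery on $L$.

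The main obstacle is verifying that every step of the bypass decomposition actually sits inside $N'$ as a \emph{Legendrian} knot, rather than forcing one to use a partial open book or generalized bypass construction, and that the innermost contact cut does not introduce an extra transverse-surgery step outside the Legendrian-surgery regime. This is exactly why the hypothesis $s < s_0$ is imposed: it ensures that the path from $s_0$ to $s$ along the Farey boundary never crosses $\infty$ or loops around to $r$, so every intermediate Farey step has an edge to its successor, each basic slice is honestly a contact $(-1)$--surgery, and the whole decomposition takes place inside the standard neighborhood of $T'$. Once this geometric picture is established, bookkeeping the contact framings against the Seifert framing confirms that Legendrian surgery on the resulting link reproduces contact $(s)$--surgery on $L$, completing the proof.
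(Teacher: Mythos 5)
First, a point of order: the paper does not prove this statement. Theorem~\ref{BEtheorem} is imported verbatim from Baldwin--Etnyre \cite{BaldwinEtnyre13} and is used as a black box (in the proofs of Proposition~\ref{torusfill} and Theorem~\ref{blowupdisk}), so there is no in-paper argument to compare yours against. Your overall strategy --- translate contact $(s)$--surgery into admissible transverse surgery on $T'$, then factor the passage from $r$ to $s$ through a Farey path of intermediate slopes realized by concentric tori inside $N'$ --- is indeed the strategy of the cited reference, so the skeleton is right.

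That said, there are genuine gaps in the way you flesh it out. The central one is the chain ``basic slice $=$ bypass $=$ contact $(-1)$--surgery.'' Attaching a basic slice (or a bypass) to $\partial N'$ is an operation on a thickened torus: it changes where the boundary sits but does not change the meridian, i.e., it is not a Dehn surgery at all. What actually implements each Farey step is contact $(-1)$--surgery on a Legendrian divide (or Legendrian ruling curve) of a concentric convex torus in $N'$ whose dividing slope is the relevant intermediate slope; one must then compute, via the Dehn-twist action on the Farey graph, that this surgery moves the meridian of the complementary solid torus one edge along the path. Your write-up conflates the $T^2\times I$ decomposition with the surgery itself, and the symptom is the leftover ``contact cut along the innermost torus'' at the end of your argument: if a transverse contact cut survives to the last step, you have not produced a purely Legendrian surgery presentation, which is the entire content of the theorem. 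A second gap is sign bookkeeping: the surgered solid torus supports several tight structures distinguished by relative Euler class, and the paper's convention for contact $(s)$--surgery is the maximally negative one (all negative stabilizations). Each basic slice in your decomposition carries a sign, and you never verify that the Legendrian link you build yields that particular contact structure rather than one of the other extensions. Both points are handled in \cite{BaldwinEtnyre13}, but as written your sketch does not close them.
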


\section{Lagrangian fillings}\label{fillings}
In this section we begin by constructing standard neighborhoods of Lagrangian disks in symplectic manifolds, and then turn to a discussion of decomposable and regular Lagrangian submanifolds. 

\subsection{Neighborhoods of Lagrangian disks} We will need to understand the effect of removing Lagrangian disks from a symplectic manifold.  Once a neighborhood of the boundary of a disk is normalized, see Lemma~\ref{mt}, the main result along these lines follows from a standard Moser type argument. In particular, arguments similar to this can be found in Section~12 of \cite{CieliebakEliashberg12}. 
\begin{theorem}\label{remove}
Let $(X,\omega)$ be a symplectic manifold with convex boundary. If $D$ is a Lagrangian disk properly embedded in $X$ and transverse to the boundary, then there is a neighborhood $N$ of $D$ in $X$ such that $(X'=\overline{X-N}, \omega'=\omega|_{X'})$ is a symplectic manifold with convex boundary and the contact manifold $\partial X'$ is obtained from the contact manifold $\partial X$ by contact $(+1)$--surgery on $\partial D$. Moreover, if $X$ had strongly convex boundary then so does $X'$.
\end{theorem}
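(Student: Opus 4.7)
The plan is to recognize a tubular neighborhood of $D$ in $X$ as the image of a Weinstein 2--handle with $D$ appearing as the co-core. Once such an identification is in place, the theorem reduces directly to the observation made at the end of Section~\ref{whandle}: a sufficiently small neighborhood of the co-core $K$ of a Weinstein 2--handle may be excised to yield a symplectic submanifold with (strongly) convex boundary, with the new contact boundary obtained from the old one by contact $(+1)$--surgery on $\partial K$. Gluing the excised handle back into $\overline{X - N(D)}$ then produces the claimed $(X', \omega')$, and the surgery on $\partial D$ matches contact $(+1)$--surgery because $tb(\partial D) = -1$ makes the surgery framing compatible.

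To construct the identification I would proceed in two stages. First, Lemma~\ref{mt} provides a normalization of a neighborhood of $\partial D$ in $\partial X$, together with a collar of $\partial D$ in $D$, matching the standard attaching picture of a Weinstein 2--handle: this yields a symplectomorphism from a neighborhood of $\partial D$ in $X$ to a neighborhood of $\partial K$ in $H_2$ that carries the collar of $\partial D$ in $D$ to the corresponding collar of $\partial K$ in $K$. Second, I would extend this to a symplectomorphism identifying a full neighborhood of $D$ with a neighborhood of $K$ via a Moser argument of the kind carried out in Section~12 of \cite{CieliebakEliashberg12}. Concretely, one fixes any diffeomorphism of tubular neighborhoods extending the boundary identification and sending $D$ to $K$, interpolates linearly between $\omega$ and the pullback of the standard form on $H_2$, and integrates the associated time-dependent Moser vector field to promote this diffeomorphism to a symplectomorphism.

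The main technical point, and the step I expect to be the main obstacle, is arranging that the primitive of the difference form appearing in the Moser argument vanishes simultaneously on $D$ and on the collar of $\partial D$, so that the resulting Moser flow is supported in the interior and fixes everything that has already been normalized. Both symplectic forms restrict to zero on $D$ (as $D$ is Lagrangian for each) and agree on the collar by the first step, so their difference is exact relative to $D$ together with the collar, and a relative Poincar\'e lemma supplies the required primitive. Once this is in place, strong convexity of $(X', \omega')$ when $(X, \omega)$ is strongly convex is automatic: the Liouville vector field on $H_2$ can be chosen to extend the Liouville field near $\partial X$ on the attaching region, it is outward-transverse to the new boundary by construction of the handle, and $\omega$ is unchanged outside the neighborhood of $D$.
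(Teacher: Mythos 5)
Your proposal is correct and follows essentially the same route as the paper: the paper proves this theorem by reducing it to Lemma~\ref{nbhd}, which identifies a neighborhood of $D$ with a neighborhood of the co-core of a Weinstein $2$--handle by first normalizing near $\partial D$ using Lemma~\ref{mt} (together with Liouville/Reeb flows to match the contact forms and then the symplectic forms in a collar) and then extending over the disk by a Moser argument relative to the already-normalized region. The only difference is one of emphasis: the paper's main technical effort goes into producing the strict contactomorphism near $\partial D$ before flowing inward, whereas you locate the difficulty in the interior Moser step, but both ingredients appear in both arguments.
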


We will need two preliminary lemmas. 
\begin{lemma}[Etnyre, Lidman, and Ng 2017, \cite{EtnyreLidmanNgPre} ]\label{mt}
Let $(X,\omega)$ be a symplectic manifold with convex boundary and $\Lambda$ a Lagrangian submanifold properly embedded in $X$ that is transverse to $\partial X$. Then there is a Liouville vector field $v$ defined near $\partial \Lambda$ that is tangent to $\Lambda$. Moreover, if $(X,\omega)$ is a strong filling of $\partial X$ then $v$ may be extended to a Liouville vector field near $\partial X$. 
\hfill\qed
\end{lemma}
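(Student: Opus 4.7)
The plan is to construct $v$ from a relative normal form around $\partial\Lambda$. Since $\Lambda$ is Lagrangian and transverse to $\partial X$, the submanifold $\partial\Lambda$ is automatically Legendrian in the contact boundary: if $Z$ is any outward Liouville field for $\omega$ near $\partial X$ and $\alpha = \iota_Z\omega$, then $T(\partial\Lambda)\subset T\Lambda$ is isotropic, so $\alpha$ vanishes on $T(\partial\Lambda)$. I would combine the Legendrian neighborhood theorem for $\partial\Lambda\subset\partial X$ with the Weinstein Lagrangian neighborhood theorem for $\Lambda\subset X$ to produce a symplectomorphism between a neighborhood of $\partial\Lambda$ in $(X,\omega)$ and a neighborhood of $\partial\Lambda$ sitting as part of the zero section of $T^*\Lambda$, sending $\partial X$ to a standard contact-type hypersurface. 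The canonical Liouville field on $T^*\Lambda$, namely the fiberwise radial vector field $\sum p_i\partial_{p_i}$, is manifestly tangent to the zero section; pulling it back yields the desired $v$ and proves the first assertion.

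For the moreover statement, let $v_0$ be the globally defined Liouville field on a collar $N$ of $\partial X$ provided by the strong filling hypothesis. Set $\alpha = \iota_v\omega$ and $\alpha_0 = \iota_{v_0}\omega$ on a common neighborhood $U$ of $\partial\Lambda$; both are primitives of $\omega$, so their difference is closed, and on a contractible shrinking of $U$ we may write $\alpha - \alpha_0 = dH$ for some smooth function $H$. Choose a cutoff function $\chi$ on $\partial X$ equal to $1$ near $\partial\Lambda$ and supported in the projection of $U$, and extend $\chi$ to $N$ so that it is constant along the flow lines of $v_0$. Then the 1-form $\alpha_0 + d(\chi H)$ is a primitive of $\omega$ on all of $N$, and its $\omega$-dual vector field $v'$ is a Liouville field agreeing with $v$ near $\partial\Lambda$ and with $v_0$ outside the support of $\chi$.

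The main obstacle is arranging that $v'$ remains outward-transverse to $\partial X$ everywhere, so the enlarged Liouville field still witnesses convexity. This is where the requirement that $\chi$ be constant in the direction of $v_0$ is crucial: under this choice, $d\chi$ annihilates $v_0$, so the correction $d(\chi H) = H\,d\chi + \chi\,dH$ is a tangential 1-form along $\partial X$ (up to a multiple of $dH$ supported where $\chi$ equals $1$ and hence where $v' = v$). A short coordinate check in the collar model then shows the contact form induced on $\partial X$ by $v'$ agrees with that from $v_0$ up to rescaling, so outward transversality persists. Every other step in the argument is a routine Moser-type deformation using the standard neighborhood theorems invoked above.
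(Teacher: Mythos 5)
The paper offers no proof of this lemma to compare against: it is quoted verbatim from Etnyre--Lidman--Ng and stamped with a \qed, so your argument has to stand on its own. It does not, because its very first step is false. You claim that for \emph{any} outward Liouville field $Z$ near $\partial X$, the boundary $\partial\Lambda$ is automatically Legendrian for $\alpha=\iota_Z\omega$, ``since $T(\partial\Lambda)$ is isotropic.'' But $\alpha(u)=\omega(Z,u)$ for $u\in T(\partial\Lambda)$ vanishes only when $Z$ lies in the symplectic orthogonal of $T(\partial\Lambda)$ --- for instance when $Z$ is tangent to $\Lambda$, which is the \emph{conclusion} of the lemma, not something you may assume. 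Concretely, take $(\R^4,\omega_{std})$ with the radial Liouville field $Z$ and the affine Lagrangian plane $\{y_1=\epsilon,\ y_2=0\}$: it meets $S^3$ transversely in the circle $\{x_1^2+x_2^2=1-\epsilon^2\}$, and on the angular vector $u=-x_2\partial_{x_1}+x_1\partial_{x_2}$ one computes $\iota_Z\omega(u)=\tfrac{\epsilon}{2}x_2\neq 0$, so this boundary is not Legendrian for the given contact form. The whole point of the lemma is to \emph{correct} the Liouville field near $\partial\Lambda$ so that it becomes tangent to $\Lambda$ (equivalently, so that $\Lambda$ becomes cylindrical over a Legendrian near the boundary); your plan of combining the Legendrian neighborhood theorem with the Weinstein Lagrangian neighborhood theorem therefore has no valid starting point. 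And even where $\partial\Lambda$ happens to be Legendrian, achieving both normalizations \emph{simultaneously} --- $\Lambda$ as the zero section of $T^*\Lambda$ and $\partial X$ as a standard contact-type hypersurface under one symplectomorphism --- is exactly the nontrivial content being asserted, not a formal consequence of the two neighborhood theorems applied separately.

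The extension step has a secondary but genuine gap as well. Writing $\alpha_0+d(\chi H)=(1-\chi)\alpha_0+\chi\alpha+H\,d\chi$, the convex combination $(1-\chi)\alpha_0+\chi\alpha$ does have $\omega$-dual field transverse to $\partial X$ (transversality to a cooriented hypersurface is a convex condition on Liouville fields), but the leftover term $H\,d\chi$ is uncontrolled. Transversality of the new dual field is the condition $\alpha'\wedge\omega|_{\partial X}>0$, and requiring $\chi$ to be constant along $v_0$-flow lines only gives $d\chi(v_0)=0$; it does not make $H\,d\chi\wedge\omega|_{\partial X}$ small where $0<\chi<1$, since $|d\chi|$ scales inversely with the width of the cutoff region while $H$ need not. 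One needs the standard further step of pushing the interpolation region down the symplectization with the Liouville flow (or otherwise normalizing $H$ and shrinking neighborhoods) to kill this term. Both issues would have to be addressed before the argument could be accepted.
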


\begin{lemma}\label{nbhd}
Let $(X,\omega)$ be a symplectic manifold with convex boundary and $D$ a Lagrangian disk in $X$ that is transverse to $\partial X$. Then $D$ has a neighborhood $N$ symplectomorphic to a neighborhood of the co-core of a Weinstein 2--handle. Moreover, if $X$ has strongly convex boundary then we can assume the Liouville vector field for $X$ near $\partial X$ and that on the 2--handle agree near $\partial D$.  
\end{lemma}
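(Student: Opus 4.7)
The plan is to prove this by a relative Weinstein Lagrangian tubular neighborhood argument: first normalize a collar of $\partial D$ using Lemma~\ref{mt}, then match interiors using a Moser-type deformation that is fixed near $\partial D$.

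First, I would apply Lemma~\ref{mt} to produce a Liouville vector field $v$ for $\omega$ defined on a neighborhood $U$ of $\partial D$ in $X$ and tangent to $D$. In the strongly convex case, I would choose $v$ to agree with the ambient Liouville field of $(X,\omega)$ near $\partial X$. On the model side, let $K$ be the co-core of the Weinstein 2--handle $H_2$ of Section~\ref{whandle}, with its Liouville field $v_H = 2x_1\partial_{x_1}+2x_2\partial_{x_2}-y_1\partial_{y_1}-y_2\partial_{y_2}$, which is tangent to $K = \{y_1 = y_2 = 0\}$. Choose a diffeomorphism $\phi: K \to D$ taking $\partial K$ to $\partial D$. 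Using the flows of $v_H$ along $\partial K \subset K$ and of $v$ along $\partial D \subset D$, I would adjust $\phi$ on a collar of $\partial K$ so that its derivative identifies $v_H$ with $v$ pointwise near $\partial D$, and so that the contact framings on the attaching regions match. Since the contact boundary near $\partial D$ is recovered from $v$ by the formula $\alpha = \iota_v \omega$, and analogously for $v_H$, this matches the symplectic forms on a neighborhood of $\partial D$ in an ambient sense once we extend via a Weinstein Lagrangian tubular embedding.

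Next I would invoke the Weinstein Lagrangian tubular neighborhood theorem to obtain two symplectic embeddings of a neighborhood of the zero section $D_0 \subset T^*D$, one onto a neighborhood of $D$ in $X$ and the other (via $\phi$) onto a neighborhood of $K$ in $H_2$. Pulling back yields two symplectic forms $\omega_0, \omega_1$ on a neighborhood of $D_0$ in $T^*D$, both with $D_0$ Lagrangian. By the previous step, $\omega_0$ and $\omega_1$ agree on a neighborhood of $\partial D_0$, because the Liouville vector fields $v$ and $\phi_* v_H$ agree there, and both forms equal $d(\iota_v \omega) = -d\alpha$ in that collar. I would then run a relative Moser argument on $\omega_t = (1-t)\omega_0 + t\omega_1$: write $\omega_1-\omega_0 = d\beta$ with $\beta$ vanishing along $D_0$ and on a (possibly smaller) neighborhood of $\partial D_0$, solve $\iota_{X_t}\omega_t = -\beta$, and integrate. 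The resulting time-$1$ flow is a symplectomorphism that is the identity on $D_0$ and on a neighborhood of $\partial D_0$, giving the desired symplectomorphism of a neighborhood of $D$ in $X$ with a neighborhood of the co-core of $H_2$. The boundary rigidity of this symplectomorphism immediately yields the Liouville matching in the strongly convex case.

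The main obstacle is arranging in Step~1 that the two symplectic forms genuinely coincide (not merely up to isotopy) on a neighborhood of $\partial D$, so that the Moser isotopy can be taken to be the identity there. This rests on Lemma~\ref{mt}, which ensures a Liouville field tangent to $D$ actually exists near $\partial D$; without it, one would only have a contact structure on $\partial X$ and no canonical way to propagate a model Liouville field into $X$ along $D$. Once that matching is set up, the rest is a standard application of the Weinstein Lagrangian neighborhood theorem and relative Moser, and no additional subtleties arise in the disk case because $D$ is contractible (so exactness of $\omega_1-\omega_0$ with the required boundary vanishing is automatic).
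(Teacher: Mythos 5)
Your overall architecture --- first normalize near $\partial D$ using Lemma~\ref{mt} so that the two local models literally coincide there, then run a Moser argument on the interior relative to the disk and to that collar --- is the same as the paper's, and the interior part of your plan (Weinstein neighborhood of the zero section plus relative Moser, using contractibility of $D$ for exactness) matches the paper's Step 3. The gap is in your boundary normalization. You propose to make the two symplectic forms agree near $\partial D$ by choosing a diffeomorphism $\phi\colon K\to D$ of the \emph{surfaces} whose derivative intertwines the restrictions $v_H|_K$ and $v|_D$, together with a matching of ``contact framings.'' That data lives on the two-dimensional disks and their normal directions along the Legendrian boundaries; it neither determines nor produces a diffeomorphism of four-dimensional collar neighborhoods of $\partial K$ and $\partial D$ under which $\iota_{v_H}\omega_2$ pulls back to $\iota_{v}\omega$ as a $1$--form, which is what ``the symplectic forms genuinely coincide on a neighborhood of $\partial D$'' requires.

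Concretely, near the convex boundary the symplectic form is the symplectization form of the induced contact form $\alpha=\iota_v\omega|_{\partial X}$, so what you actually need is a \emph{strict} contactomorphism --- one pulling back $\alpha_1$ to $\alpha_2$ exactly, not merely preserving the contact structures or agreeing along the Legendrian --- from a neighborhood of $\partial K$ in $\partial H_2$ to a neighborhood of $\partial D$ in $\partial X$ taking $\partial K$ to $\partial D$; only then can the Liouville flows identify four-dimensional collars symplectically. This does not come for free from Lemma~\ref{mt}, which only supplies the tangent Liouville field. In the paper this is the bulk of the argument (its Step 1): one builds annuli tangent to the contact planes along $\partial D$ and $\partial K$, matches them by the two-dimensional Weinstein theorem, extends by the Reeb flows to get a map $h$ with $h^*\alpha_1=\alpha_2$ \emph{along $\partial K$ only}, and then runs a separate Moser/Gray-type deformation (the flow of $u_s=\phi w_s$ with $d\phi=-(h^*\alpha_1-\alpha_2)$) to upgrade this to equality of contact forms on a whole neighborhood. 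Without that step, your assertion that $\omega_0$ and $\omega_1$ agree near $\partial D_0$ is unjustified, the relative Moser flow cannot be taken to be the identity near the boundary, and the Liouville-matching claim in the ``Moreover'' clause does not follow.
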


\begin{proof}[Proof of Theorem~\ref{remove}]
In Section~\ref{whandle} we discussed the effect of removing the co-core of a Weinstein 2--handle. This discussion caries over to a neighborhood of a general Lagrangian disk by Lemma~\ref{nbhd}.
\end{proof}

\begin{proof}[Proof of Lemma~\ref{nbhd}]
Let $(H_2,\omega_2)$ be a Weinstein 2--handle as discussed in Section~\ref{whandle}; we will use notation from that section for the co-core $K$ and other parts of the handle. 
There is an expanding vector field $v$ for a neighborhood of $\partial D$ in $\partial X$, and let $v_2$ be the expanding vector field for $H_2$ discussed in Section~\ref{whandle}. Using Lemma~\ref{mt} we can assume that the $v$ is  tangent to $D$ near $\partial D$. The 1--form $\alpha_1=\iota_{v}\omega$ is a contact form for the contact structure on $\partial X$ near $\partial D$, and $\alpha_2=\iota_{v_2}\omega_2$ is a contact form for a neighborhood of the boundary of the co-core in $\partial H_2$. 

We will construct our symplectomorphism in three steps. 

\noindent
{\bf Step 1:} {\em Construct a neighborhood $U$ of $\partial D$ in $\partial X$, a neighborhood $U'$ of $\partial K$ in $\partial H_2$, and a diffeomorphism $f:U'\to U$ such that $f^*\alpha_1=\alpha_2$ and $f(\partial K)=\partial D$.}

There is an annulus $A$ containing $\partial D$ that is tangent to $\xi_1 = \ker \alpha_1$ along $\partial D$, transverse to $\xi_1$ away from $\partial D$, and transverse to the Reeb vector field for $\alpha_1$. Notice that $d\alpha_1$ is a symplectic form when restricted to $A$ and $\partial D$ is a Lagrangian submanifold in $A$. We can construct a similar annulus $A'$ for $\partial K$. Since neighborhoods of Lagrangian submanifolds are all standard we can assume the annuli are chosen so there is a symplectomorphism $h:A'\to A$ taking $\partial K$ to $\partial D$. 

Since the Reeb flow of $\alpha_i$ preserves $d\alpha_i$, we can use the Reeb flows to extend $h$ to a diffeomorphism from a neighborhood $W'$ of $\partial K$ in $\partial H_2$ to a neighborhood $W$ of $\partial D$ in $\partial X$. Clearly $h^* d\alpha_1= d\alpha_2$ and, moreover, $h^*\alpha_1=\alpha_2$ along $\partial K$, since the 1-forms have the same kernel and agree on the Reeb field. 

We now construct a diffeomorphism $g$ from a neighborhood $U'$ of $\partial K$ to another such neighborhood (both contained in $W'$) such that $g^*(h^*\alpha_1)=\alpha_2$. Then setting $f$ equal to $h\circ g$ and $U=f(U')$  will give the claimed contactomorphism. To find $g$, consider the 1-forms
\[
\beta_s=\alpha_2 + s(h^*\alpha_1-\alpha_2),
\]
for $s\in[0,1]$.
On a small enough neighborhood of $\partial K$, the $\beta_s$ are all contact forms. This is because the contact condition is an open condition, the $\beta_s$ all agree along $\partial K$, and $d\beta_s$ is a symplectic form on $\ker \beta_s$ for all $s$. 

Notice that $d(h^*\alpha_1-\alpha_2)=0$ everywhere and $h^*\alpha_1-\alpha_2=0$ along $\partial {K}$. Thus, there is a function $\phi$ from a neighborhood of $\partial {K}$ to $\R$ such that $d\phi= -(h^*\alpha_1-\alpha_2)$ and $\phi=0$ along $\partial {K}$. 

Denote the Reeb vector field of $\beta_s$ by $w_s$ and set $u_s=\phi w_s$. Let $\psi_s$ denote the flow of $u_s$, and let $U'$ be a neighborhood of $\partial {K}$ on which the flow is defined up to time $s=1$ (since $u_s$ is 0 along $\partial {K}$ there is such a neighborhood). One can compute $\frac{d}{ds} \left(( \psi_s)^*\beta_s \right) =0$.
Thus we see that $(\psi_s)^*\beta_s=\beta_0$. In particular, setting $g=\psi_1$ gives the claimed diffeomorphism. 

\noindent
{\bf Step 2:} {\em Construct neighborhoods $V$ of $\partial D$ in $X$ and $V'$ of $\partial K$ in $H_2$  and a diffeomorphism $F:V'\to V$ such that $U=V\cap \partial X$, $U'=V'\cap \partial H_2$, $F$ is an extension of $f$, and $F$ is a symplectomorphism.} 

The flow of the $v_1$ and $v_2$ in $X$ and $H_2$, respectively,  can be used to extend the neighborhoods $U$ and $U'$, respectively, to neighborhoods $V$ and $V'$. Moreover, the flows allow us to extend $f$ to a diffeomorphism $F$ with the required properties. 

\noindent
{\bf Step 3:} {\em Extend $F$ to a symplectomorphism $F'$ of a neighborhood $N'$ of $K$ in $H_2$ to a neighborhood of $D$ in $X$.} It is clear that $F'$ is the desired symplectomorphism. 

We can first extend $F$ across the disk $K$ (taking $K$ to $D$). Then choosing appropriate metrics on $H_2$ and $X$, we can use the exponential map to further extend this map to a diffeomorphism of a neighborhood of $K$ to a neighborhood of $D$ so that it agrees with $F$ on $V'$ and preserves the symplectic structure along $K$. It is now a standard application of Moser's technique to isotope the map, relative to $D$ and $V'$, to a symplectomorphism. 
\end{proof}

\subsection{Decomposable Lagrangian surfaces}\label{dls}
A systematic way to build Lagrangian surfaces with Legendrian boundary was given in \cite{EkholmHondaKalman16}. More specifically, a sequence of the following moves can be used to construct a Lagrangian surface in a piece of the symplectization of a contact manifold, and such a surface will be called a \dfn{decomposable surface}. The sequence of moves are:
\begin{enumerate}
\item\label{1} Birth of maximal Thurston--Bennequin unknot. 
\item\label{2} Pinch move.
\item Legendrian isotopy. 
\end{enumerate}
Moves \eqref{1} and \eqref{2} are shown in Figure~\ref{decomp} while the last move is obvious. 
\begin{figure}[h]
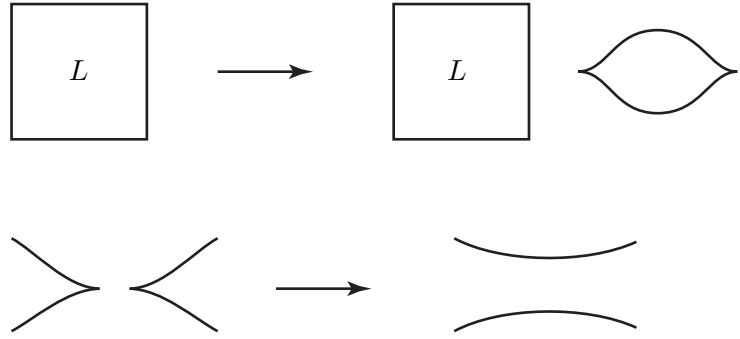

\begin{overpic}
{decomp}
\put(30, 97){$L$}
\put(173, 97){$L$}
\end{overpic}
\caption{The top figure shows the birth of a maximal Thurston-Bennequin unknot. The bottom figure shows a pinch move.}
\label{decomp}
\end{figure}  

To see the Lagrangian cobordisms associated to the above moves, we will build Weinstein handlebody structures on a product cobordism, and for later use, we note that there is a Liouville vector field tangent to the Lagrangian submanifolds. 

\smallskip
\noindent
{\bf Move (1):} Given a Legendrian link $L$ in $(M,\xi)$, let $\Lambda=[a,b]\times L$ in a piece $X=[a,b]\times M$ of the symplectization of $(M,\xi)$. Now take a disjoint union of $X$ and $B^4$ with its standard symplectic structure, and take the obvious Lagrangian disk in $B^4$ with boundary the unknot. Now attach a Weinstein 1--handle connecting $[a,b]\times M$ to $B^4$ so that the attaching region is disjoint from both $\Lambda$ and the Lagrangian disk. The result is a Weinstein structure on $[a,b]\times M$ that can be deformed to the standard structure, in which we see a Lagrangian cobordism in which a maximal Thurston--Bennequin unknot is born. Notice also that the Liouville vector field of the Weinstein structure is tangent to the Lagrangian cobordism. 

\smallskip
\noindent
{\bf Move (2):}  The handle addition describing the pinch move is shown in Figure~\ref{pinchmove}.
\begin{figure}[h]
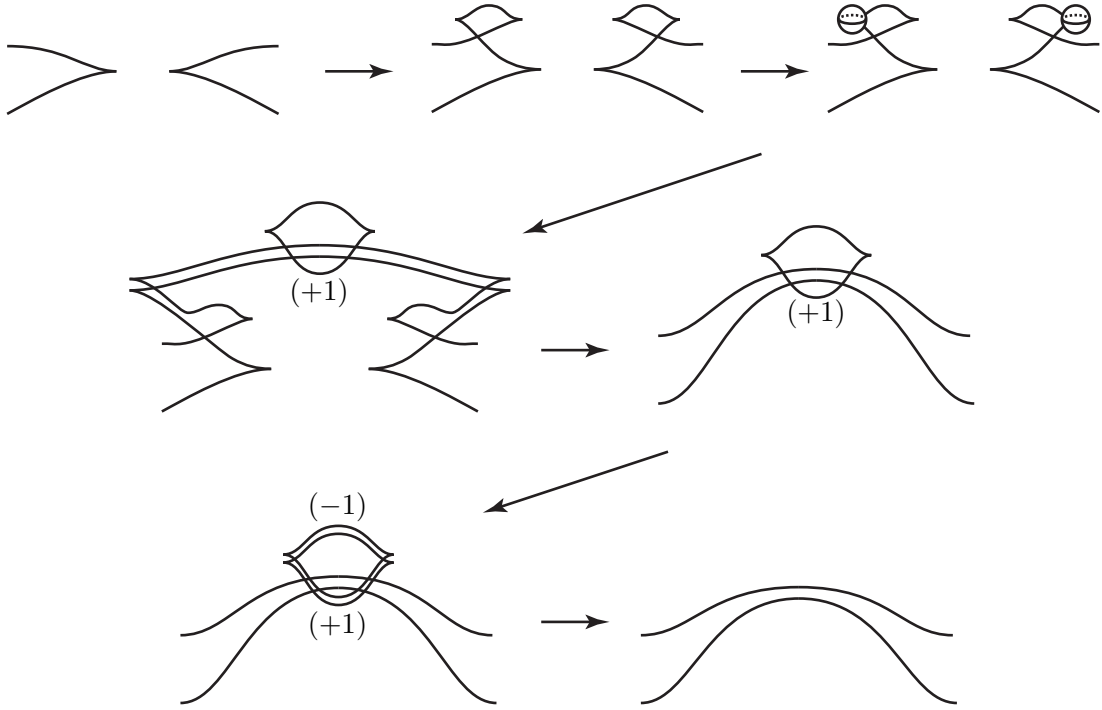

\begin{overpic}
{pinchmove}
\put(107, 153){$(+1)$}
\put(295, 145){$(+1)$}
\put(114, 27){$(+1)$}
\put(114, 72){$(-1)$}
\end{overpic}
\caption{Sequence of moves constructing a pinch move cobordism. On the upper row, we see two Reidemeister moves and then a 1--handle is attached, that is converted to a contact $(+1)$--surgered 2--handle on the second row, followed by a lightbulb move. On the third row a cancelling 2--handle is added, and then is cancelled in the final figure.}
\label{pinchmove}
\end{figure}  
More specifically, the pinch move applied to $L$ in the contact manifold $(M,\xi)$ is effected by attaching a cancelling pair of 1 and 2--handles to the symplectization $[a,b]\times M$ of $(M, \xi)$. To this end, we can find a Darboux ball $B$ in $M$ so that $L\cap B$ is seen in the upper left of Figure~\ref{pinchmove}.  Now a Weinstein 1--handle is attached to the symplectization in $B$. As discussed in Section~\ref{whandle}, this can be done so that a Lagrangian 1--handle is attached to $[a,b]\times L$, and there is a Liouville vector field tangent to the Lagrangian cobordism. Let the upper boundary of the Lagrangian be the Legendrian $L'$. See the upper right of Figure~\ref{pinchmove}. We now will add a canceling Weinstein 2--handle. Before we do this, we alter the representation of of the upper boundary of the symplectic cobordism as shown in the middle row of Figure~\ref{pinchmove}. Notice that the first diagram in the second row describes the same contact 3--manifold as the diagram in the upper right of the figure,  {\em cf.\@} \cite{DingGeiges09}. The next diagram is just an isotopy of $L'$, {\em cf.\@} \cite{Gompf98}. Now in the bottom left of Figure~\ref{pinchmove} the result of attaching a cancelling Weinstein 2--handle is shown. The result is a Weinstein structure on $[a,b]\times M$ that can be deformed to the standard structure \cite{CieliebakEliashberg12}, in which we see a Lagrangian cobordism in which a pinch move has been done. Notice also that the Liouville vector field of the Weinstein structure is tangent to the Lagrangian cobordism. 

\smallskip
\noindent
{\bf Move (3):} In \cite{EkholmHondaKalman16}, it was shown how to take a Legendrian isotopy from $L$ to $L'$ and create an exact Lagrangian cobordism $\Lambda$ in a piece of the symplectization $([a,b]\times M, \omega =d\lambda)$, where $\alpha$ is a contact form for our contact structure and $\lambda=e^t \alpha$ (in fact, this was only shown in \cite{EkholmHondaKalman16} for $M = S^3$, but since any isotopy can be broken into a sequence of isotopies supported in Darboux balls, their statement applies for general $M$ as well). There is a symplectomorphism $\phi$ from a neighborhood $N$ of $\Lambda$ in $[a,b]\times M$ to a neighborhood $N'$ of the product cobordism $[a,b]\times Z$ in the symplectization of the 1--jet space of $S^1$ (where $Z$ is the zero section of the jet space). Let $\beta$ be the standard contact form on the jet space, so $d(e^t \beta)$ is the symplectic form on the symplectization.  Note that $\lambda'=\phi^* (e^t \beta)$ is a primitive for $\omega$ on $N$ and $\lambda'-\lambda$ is closed. Moreover, since $\Lambda$ is exact for both $\lambda$ and $\lambda'$, and $N$ is homotopy equivalent to $\Lambda$, we see that there is a function $h:N\to \R$ such that $dh=\lambda'-\lambda$. Now if $f$ is a function on $M$ that is 1 near $\Lambda$ and 0 outside $N$ then $\lambda + d(fh)$ is a primitive for $\omega$ and equal to $\lambda'$ near $\Lambda$. Thus $(\phi^{-1})_*(\partial_t)$ is the Liouville field for $\lambda + d(fh)$ near $\Lambda$ and it is tangent to $\Lambda$. Thus, we have deformed the Weinstein structure on the symplectization as desired.

\begin{proof}[Proof of Proposition~\ref{connectsum}] The connected sum of two Legendrian knots can be achieved by a pinch move. Thus, if $L$ and $L'$ bound disjoint Lagrangian disks, we may perform a pinch move to create a Lagrangian disk bounded by $L\# L'$.
\end{proof}

\begin{proof}[Proof of Proposition~\ref{cable}]
In \cite{CornwellNgSivek16}, it was shown that if there is a Lagrangian concordance from $L$ to $L'$ then there is also a Lagrangian concordance from a given Legendrian satellite of $L$ to the same satellite of $L'$ (in that paper it is assumed the concordance is a product at the end, but this hypothesis is not necessary, in light of Lemma~\ref{mt}, see \cite{EtnyreLidmanNgPre}). 
It is easy to check that the $(n,1)$--cable of the maximal Thurston-Bennequin invariant unknot $U$ described in Theorem~\ref{cable} results in $U$. Thus if $L$ bounds a Legendrian disk, then there is a Lagrangian concordance from $U$ to $L$, and hence a Lagrangian concordance from $U$ to the $(n,1)$--cable of $L$. Hence, the $(n,1)$-cable of $L$ bounds a Lagrangian disk. 

In \cite{LiuSabloffYacavoneZhouPre} it was shown that a decomposable concordance between knots induces a decomposable concordance between certain Legendrian satellites of the knots, and in particular they construct a decomposable concordance between the $(n,1)$--cable of the unknot and the $(n,1)$--cable of $L$, if $L$ bounds a decomposable disk. Thus, the cable also bounds a decomposable Lagrangian disk. 
\end{proof}

Given a Weinstein manifold $(W,\omega)$ (that is a manifold constructed from a union of standard symplectic 4--balls by attaching Weinstein handles), notice that there is a global Liouville vector field $v$. A Lagrangian submanifold $\Lambda$ is called \dfn{regular} if the Weinstein structure is Weinstein homotopic to one in which $\Lambda$ is tangent to $v$.

From our discussion above the following lemma is clear. 
\begin{lemma}\label{regular}
If a Lagrangian submanifold in the symplectization of a contact manifold is decomposable then it is regular. \hfill \qed
\end{lemma}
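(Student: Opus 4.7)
The plan is to assemble a regular Weinstein structure from the ingredients already prepared in the three move analyses above. Suppose $\Lambda$ is a decomposable Lagrangian in a piece $([a,b]\times M, d(e^t\alpha))$ of the symplectization of $(M,\xi)$, so that $\Lambda$ is built from a finite sequence of elementary moves (1), (2), (3) applied at heights $a=t_0<t_1<\cdots<t_N=b$. Write $\Lambda_i = \Lambda \cap ([t_{i-1},t_i]\times M)$ for the $i$-th elementary cobordism.

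First, I would record what the three move discussions actually established. For each elementary cobordism $\Lambda_i\subset[t_{i-1},t_i]\times M$, the authors produced a Weinstein structure on the cobordism $[t_{i-1},t_i]\times M$ whose Liouville vector field $v_i$ is everywhere tangent to $\Lambda_i$, and which is Weinstein homotopic (rel the contact structures on the boundaries) to the standard product Liouville structure $\partial_t$ on the piece of the symplectization. This is exactly the content of the paragraphs labeled Move (1), (2), (3).

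Next, I would concatenate. Stack the local models $(v_1,\Lambda_1),\ldots,(v_N,\Lambda_N)$ to obtain a Weinstein structure on all of $[a,b]\times M$ whose Liouville vector field $v=\bigsqcup v_i$ is globally tangent to $\Lambda=\bigsqcup\Lambda_i$. Because each $v_i$ was constructed to agree with the symplectization vector field $\partial_t$ near the top and bottom of its slab (and each contact structure at the interface is the given one on $(M,\xi)$), the resulting vector field is well-defined across the interfaces $\{t_i\}\times M$, giving a genuine global Liouville field. A piecewise Weinstein homotopy, performed slab by slab rel the boundary, then deforms this assembled Weinstein structure to the standard symplectization structure; the key point is that each local homotopy used in Moves (1)--(3) was constructed to fix the boundary contact data, so the homotopies can be concatenated without disturbing the tangency of $\Lambda$ to the Liouville field.

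The only step that is not completely mechanical is verifying that the tangency of $v$ to $\Lambda$ persists throughout the Weinstein homotopy to the standard product structure; in Move (3) this was guaranteed because the primitive was modified by $d(fh)$ with $f\equiv 1$ on $\Lambda$, and in Moves (1)--(2) it was built into the handle attachment via Section~\ref{whandle}. Thus we can carry out the homotopy without ever breaking tangency, which is precisely the definition of $\Lambda$ being regular.
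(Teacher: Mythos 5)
Your proof is correct and follows exactly the route the paper intends: the paper's ``proof'' of Lemma~\ref{regular} is nothing more than the observation that the Weinstein structures built in the discussions of Moves (1)--(3) each have Liouville field tangent to the corresponding Lagrangian piece, agree with $\partial_t$ near the slab boundaries, and concatenate to a Weinstein structure on $[a,b]\times M$ that is Weinstein homotopic to the standard product structure. One small correction to your final paragraph: the definition of \emph{regular} only requires that the standard structure be Weinstein homotopic to \emph{some} structure in which $\Lambda$ is tangent to the Liouville field --- which your assembled structure already is --- so you do not need (and in general cannot have) tangency persisting throughout the homotopy back to the product structure, since $\partial_t$ is not tangent to a non-cylindrical $\Lambda$.
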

\begin{remark}
 Golla and Juh{\'a}sz had also observed this lemma, but decided not to include it in \cite{GollaJuhaszPre} once this paper appeared. 
\end{remark}

Our main interest in regular Lagrangian submanifolds is due to the following result. 
\begin{theorem}[Eliashberg, Ganatra, and Lazarev 2016, \cite{EliashbergGanatraLazarevPre}]\label{cocore}
Let $(W,\omega)$ be a Weinstein manifold and $D$ a properly embedded Lagrangian disk. Then $D$ is regular if and only if there is a Weinstein handlebody structure on $W$ for which $D$ coincides with the co-core Lagrangian disk of one of the Weinstein $2$--handles if and only if there is a Weinstein handlebody structure on $W$ for which $D$ coincides with the the obvious Lagrangian disk in the 0-handle of a Weinstein structure on $(W,\omega)$.
\end{theorem}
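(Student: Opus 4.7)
My plan is to establish the two equivalences $(1) \Leftrightarrow (2)$ and $(2) \Leftrightarrow (3)$ separately. The easy directions $(2) \Rightarrow (1)$ and $(3) \Rightarrow (1)$ are immediate from the explicit local models recorded in Section~\ref{whandle}: the Liouville field $v = 2x_1\partial_{x_1} + 2x_2\partial_{x_2} - y_1\partial_{y_1} - y_2\partial_{y_2}$ on the standard Weinstein $2$--handle is manifestly tangent to the co-core $K = \{y_1 = y_2 = 0\}$, and the radial Liouville field on the Weinstein $0$--handle is tangent to the obvious Lagrangian disk $\{y_1 = y_2 = 0\} \cap B^4$. Since regularity is invariant under Weinstein homotopy, either presentation forces $D$ to be regular.

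For the main direction $(1) \Rightarrow (2)$, the idea is to first localize and then globalize. By Lemma~\ref{nbhd}, the regularity hypothesis yields a neighborhood $N$ of $D$ symplectomorphic to a neighborhood of the co-core of a standard Weinstein $2$--handle $H_2$, with the ambient Liouville field matching the standard one on $N$. The remaining task is to produce a Weinstein Morse function $\phi$ on $W$, Weinstein homotopic to the given one, which carries an index-$2$ critical point at a chosen interior point $p\in D$ whose unstable manifold closes up to exactly $D$. Concretely I would (i) deform $\phi$ near $D$ so that $\phi|_D$ has a unique local minimum at $p$ while keeping $v$ tangent to $D$; (ii) invoke the Cieliebak--Eliashberg $h$--principle to cancel or reshuffle any extraneous critical points produced by this modification against critical points elsewhere in $W$; and (iii) recognize the modified $\phi$ as giving a Weinstein handlebody decomposition in which the $2$--handle corresponding to $p$ has co-core precisely $D$.

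The equivalence $(2) \Leftrightarrow (3)$ is then a matter of standard Weinstein handle manipulations. Given $(2)$, one attaches a Weinstein $1$--handle whose belt sphere cancels the attaching sphere of the $2$--handle containing $D$; after the cancellation, $D$ sits inside the resulting $0$--handle as the obvious Lagrangian disk. Conversely, given $(3)$, introducing a cancelling $1$-$2$ pair of Weinstein handles in a neighborhood of $\partial D$ reinterprets $D$ as the co-core of the new $2$--handle. The principal obstacle throughout the argument is step (ii) of the $(1) \Rightarrow (2)$ direction: creating and cancelling critical points of $\phi$ while simultaneously preserving the tangency of the Liouville flow to $D$ and fixing the Legendrian boundary $\partial D$ is delicate, and is precisely where the full $h$--principle machinery for Weinstein structures of \cite{CieliebakEliashberg12} is indispensable.
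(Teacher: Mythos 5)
The paper does not prove this statement: Theorem~\ref{cocore} is imported verbatim from Eliashberg, Ganatra, and Lazarev \cite{EliashbergGanatraLazarevPre} and is used as a black box (in Corollary~\ref{steincomp} and Theorem~\ref{construction}), so there is no in-paper argument to compare yours against. Judged on its own terms, your proposal gets the easy implications right --- the explicit Liouville fields on the model $0$-- and $2$--handles are tangent to the equatorial disk and the co-core respectively, so either presentation certifies regularity --- but the implication $(1)\Rightarrow(2)$, which is the entire content of the theorem, is not actually carried out.

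Two concrete problems with that direction. First, Lemma~\ref{nbhd} does not use regularity at all: \emph{every} properly embedded Lagrangian disk transverse to the boundary has a neighborhood symplectomorphic to a neighborhood of a co-core, so the lemma cannot be where the regularity hypothesis enters; regularity is a global condition (the ambient Liouville field, after Weinstein homotopy, is tangent to $D$), and the local model buys you nothing toward it. Second, your step (ii) --- ``invoke the Cieliebak--Eliashberg $h$--principle to cancel or reshuffle extraneous critical points while keeping $v$ tangent to $D$ and fixing $\partial D$'' --- is precisely the theorem being proved, not a tool you can cite. The $h$--principle of \cite{CieliebakEliashberg12} governs existence and homotopy of Weinstein structures in a fixed formal class; it says nothing about realizing a prescribed Lagrangian as the unstable manifold of a single index--$2$ zero, and a Lagrangian disk tangent to $v$ may a priori contain several zeros of $v$ or none, with $\phi|_D$ having no reason to be Morse with a unique minimum. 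Making this work relative to $D$ is exactly the new analysis of \cite{EliashbergGanatraLazarevPre}, and you should either reproduce their argument or, as the paper does, simply cite the result.
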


Thus we have the following corollary.
\begin{corollary}\label{steincomp}
The symplectic structure on the complement of a Lagrangian disk in $(B^4,\omega_{std})$ coming from Theorem~\ref{remove} has a Stein structure if and only if the disk is regular. In particular, the complement of a decomposable Lagrangian disk in $(B^4,\omega_{std})$ has a Stein structure. 
\end{corollary}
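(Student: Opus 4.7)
My plan is to deduce both directions of the equivalence from Theorem~\ref{cocore} of Eliashberg--Ganatra--Lazarev, using Lemma~\ref{nbhd} to translate between ``neighborhood of a Lagrangian disk'' and ``neighborhood of the co-core of a Weinstein $2$--handle''. The ``in particular'' clause is then immediate from Lemma~\ref{regular}.

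For the forward direction, suppose $D$ is regular. By Theorem~\ref{cocore} there is a Weinstein handlebody structure on $(B^4,\omega_{std})$ in which $D$ coincides with the co-core of some Weinstein $2$--handle $H_D$. Removing $H_D$ leaves the $0$--handle together with the remaining Weinstein handles, and this union is itself a Weinstein (hence Stein) subdomain $X_1\subset B^4$ with the restricted symplectic form. To match this with the complement produced by Theorem~\ref{remove}, I would compare the two neighborhoods of $D$: the neighborhood $N$ from Theorem~\ref{remove} is symplectomorphic to a neighborhood of the co-core of a Weinstein $2$--handle by Lemma~\ref{nbhd}, and $H_D$ is such a neighborhood by construction, so after a compactly supported isotopy the two complements $\overline{B^4\setminus N}$ and $X_1$ are symplectomorphic. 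Hence the symplectic structure of Theorem~\ref{remove} admits a Stein structure.

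For the reverse direction, suppose $X'=\overline{B^4\setminus N}$ admits a Stein structure compatible with $\omega|_{X'}$, equivalently a Weinstein structure with the same convex contact boundary. The boundary $\partial D$ sits in $\partial X'$, and by Lemma~\ref{mt} the Liouville vector field near $\partial D$ can be taken tangent to $D$, so $\partial D$ is Legendrian in $\partial X'$ with the appropriate framing. I would then attach a Weinstein $2$--handle to $X'$ along $\partial D$; the resulting Weinstein manifold $W'$ is diffeomorphic to $B^4$ (since this attachment inverts the removal of a neighborhood of the co-core by Lemma~\ref{nbhd}), its co-core is exactly $D$, and its symplectic form $\omega'$ agrees with $\omega_{std}$ on $X'$. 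Applying Lemma~\ref{nbhd} to both $\omega'$ and $\omega_{std}$ gives standard neighborhoods of $D$ in each, which can be matched by a Moser-type isotopy rel $X'$ to produce a symplectomorphism $(B^4,\omega')\to(B^4,\omega_{std})$ carrying $D$ to itself. Thus $(B^4,\omega_{std})$ admits a Weinstein handlebody structure in which $D$ is a co-core, and Theorem~\ref{cocore} gives regularity.

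The main obstacle is the last compatibility step: knowing only that $X'$ is Stein gives a Weinstein $B^4$ after reattaching, but one must verify this is symplectomorphic to $(B^4,\omega_{std})$ carrying the new co-core to the original $D$. This is exactly where the normal form for Lagrangian disk neighborhoods (Lemma~\ref{nbhd}), combined with a standard Moser argument rel boundary and rel $D$, is indispensable; everything else is bookkeeping.
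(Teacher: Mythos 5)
Your proposal is correct and follows essentially the same route as the paper: both directions are deduced from Theorem~\ref{cocore}, with the forward direction realizing the complement as the result of deleting the Weinstein $2$--handle whose co-core is $D$, and the reverse direction reattaching a Weinstein $2$--handle to recover $B^4$ with $D$ as a co-core. The paper's proof is terser and leaves implicit the Moser/Lemma~\ref{nbhd} bookkeeping that you spell out to identify the reattached manifold with $(B^4,\omega_{std})$, but the underlying argument is the same.
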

\begin{proof}
If the disk is decomposable then it is regular, by Lemma~\ref{regular}, and thus by Theorem~\ref{cocore}, there is a Weinstein structure on $B^4$ for which the disk is the co-core of a Weinstein 2--handle. Thus, removing a neighborhood of the disk is the same as removing the 2--handle. So clearly the complement is a Weinstein manifold, and thus a Stein manifold \cite{CieliebakEliashberg12}.

On the other hand, if the complement of the disk has a Stein structure, then we can get back to the 4--ball by adding a 2--handle whose co-core will be isotopic to the disk. By construction, it is regular. 
\end{proof}

We end with a proof of Theorem~\ref{construction} stating that regular Lagrangian disks come from the construction of Yasui and McCullough. 
\begin{proof}[Proof of Theorem~\ref{construction}]
If a Lagrangian disk $D$ in $(B^4,\omega_{std})$ is regular, then by Theorem~\ref{cocore} there is a Weinstein structure on $(B^4,\omega_{std})$ for which $D$ is the Lagrangian equatorial disk in the 0--handle. This is precisely the set up in the construction discussed after Theorem~\ref{ribbon}.

Conversely, if $D$ comes from this construction then it is clearly regular (since it is the Lagrangian equatorial disk in the 0--handle).
\end{proof}

\section{Positive contact surgeries}
We begin by proving our main theorem concerning when contact $(r)$--surgery is fillable, for $r \in (0, 1]$. 
\begin{proof}[Proof of Theorem~\ref{thm1}]
Let $L$ be a Legendrian knot in $(S^3,\xi_{std})$ and $L'$ a parallel copy of $L$. Let $(M,\xi)$ be the result of ($+1$)--contact surgery on $L$ and assume it is strongly symplectically fillable by $(X,\omega)$. We can blow down any exceptional spheres in $(X,\omega)$, and thus assume that $(X,\omega)$ is minimal. 

Now attach a Weinstein 2--handle to $(X,\omega)$ along $L'$ (which of course sits in $(M,\xi)$) to obtain the symplectic manifold $(X',\omega')$. From Lemma~\ref{minimal} we know that $(X',\omega')$ is minimal. Moreover, it is a strong symplectic filling of $\partial X'=S^3$. A well-known result of Gromov and McDuff \cite{Gromov85, McDuff90} says that $X'$ is symplectomorphic to $B^4$ with the standard symplectic structure. From the discussion in Section~\ref{surgery}, it is clear that the boundary of the co-core of the Weinstein 2--handle is Legendrian isotopic to $L$ and thus, since the co-core is Lagrangian, we see that $L$ bounds a Lagrangian disk in the $4$--ball. 

Now assume that $L$ is a Legendrian knot in $(S^3,\xi_{std})$ that bounds a Lagrangian disk $D$ in $(B^4,\omega_{std})$. Theorem~\ref{remove} says that we can remove a neighborhood of this disk to obtain a symplectic manifold $(X,\omega)$ with strongly convex boundary that is the result of contact $(+1)$--surgery on $L$. 

We now verify that the result of removing a Lagrangian disk from $B^4$ will be an exact filling with the homology of $S^1\times D^3$. By Lemma~\ref{nbhd} there is a neighborhood of $D$ in $B^4$ symplectomorphic to a neighborhood of the co-core of a Weinstein 2--handle. Removing this neighborhood will result in a symplectic manifold $(X,\omega)$ with convex boundary. Removing a disk from $B^4$ will give a homology $S^1\times D^3$. Since the filling is a strong filling, there is a primitive $\alpha$ for $\omega$ near $\partial X$. Moreover, $\omega\in H^2(X)=0$ so there is a primitive $\beta$ for $\omega$ on all of $X$. In a neighborhood of $\partial X$, we have that $\alpha-\beta$ is a closed 1--form and so defines an element of $H^1(\partial X)=\Z$. By adding a closed form to $\beta\in H^1(X)$ if necessary, we can assume that $\alpha-\beta=0$ in $H^1(\partial X)$. Thus there is a function $h$ defined near $\partial X$ such that $dh=\alpha-\beta$. Now if $f$ is a function equal to 1 near $\partial X$, zero outside a larger neighborhood of $\partial X$ then $\beta + d(fh)$ is a global primitive for $\omega$ that agrees with $\alpha$ near $\partial X$. Thus $(X,\omega)$ is an exact symplectic filling of $\partial X$.

We have now completed the theorem for contact $(r)$--surgery when $r=1$. We are left to see that when $r\in(0,1)$, the resulting manifold is never fillable (a stronger statement than showing that it is not strongly symplectically fillable). To this end, assume contact $(r)$--surgery on $L$ in $(S^3,\xi_{std})$ is symplectically fillable by $(X,\omega)$. Since smoothly this is not $0$--surgery, we know $\partial X$ is a rational homology sphere and hence we can assume that the filling is a strong filling, \cite{Etnyre04a}.  If $tb(L) = -1$, then according to \cite{MarkTosunPre}, contact $(r)$--surgery on $L$ has vanishing Heegaard Floer invariant for $r < 0$.  Since this invariant must be non-vanishing for strongly symplectically fillable contact structures \cite{Ghiggini06b}, contact $(r)$--surgery on such an $L$ must not be fillable for any $r \in (0, 1)$.

Now, assume that $tb(L) \neq -1$.  Consider a neighborhood $N$ of $L$ with convex boundary, and choose a framing on $L$ so that the dividing curves have slope $0$. After contact $(r)$--surgery on $L$, we have a solid torus $N'$ with Legendrian core $L'$ in the resulting manifold, and using a basis to compute slopes for $\partial N'$ induced from the basis on $\partial N$, we see that $N'$ is a torus with dividing curves of slope $0$ and meridional slope $r$.  As discussed in Section~\ref{transsurg} we know that any slope clockwise of $r$ and counterclockwise of $0$ can be realized as the characteristic foliation on a standard neighborhood of a transverse knot $T$ inside the surgery torus.

If there is an edge between $r$ and $1$ in the Farey tessellation, then the slope $1$ curve is a longitude for the torus which is counter-clockwise of $0$ (the contact framing of $L$), and we can perform an admissible transverse surgery on $T'$ with slope $1$ by attaching a symplectic $2$--handle to $X$ using Theorem~\ref{gaythm}. This will give a weak symplectic filling of the resulting contact manifold, which by the discussion in Section~\ref{transsurg} is the result of contact $(+1)$--surgery on $L$.

If there is not an edge in the Farey tessellation from $r$ to $1$ then choose $s$ between $r$ and $1$ that is closest to $1$ with an edge to $r$. The above argument provides a weak symplectic filling of contact $(s)$--surgery on $L$. As above, we can deform the symplectic structure to be a strong filling, and after doing this a finite number of times, we arrive at a strong symplectic filling of contact $(+1)$--surgery on $L$.  However, given our assumption that $tb(L) \neq -1$, we know that $L$ does not bound a Lagrangian disk in $B^4$, and so as above, contact $(+1)$--surgery on $L$ cannot be strongly symplectically fillable.  Thus, regardless of the value of $tb(L)$, contact $(r)$--surgery on $L$ is not symplectically fillable for any $r \in (0, 1)$.

Finally, the statement about Stein fillings of contact $(+1)$--surgery follows from Corollary~\ref{steincomp}.
\end{proof}

We now turn to larger contact surgeries and prove Proposition~\ref{torusfill} concerning fillability of contact $(r)$--surgeries on Legendrian positive torus knots with maximal Thurston--Bennequin invariant. 
\begin{proof}[Proof of Proposition~\ref{torusfill}]

Smooth $(pq-1)$--surgery on a positive $(p,q)$--torus knot yields a lens space, \cite{Moser71}.  We know that any tight contact structure on a lens space is Stein fillable, \cite{Giroux00,Honda00a}.  In \cite{LiscaStipsicz04}, it was shown that contact $(r)$--surgery on a maximal Thurston--Bennequin invariant $(p,q)$--torus knot $L$ is tight for any $r > 0$.  Since $tb(L)=pq-p-q$ we see that contact $(r-pq+p+q)$--surgery on $L$ corresponds to smooth $r$--surgery on $L$.  Thus contact $(r)$--surgery on $L$ is Stein fillable when $r = p+q-1$.  To show that this also holds for $r > p + q - 1$, we note that by Theorem~\ref{BEtheorem}, for any $r > p+q-1$, we can find a Legendrian link in the result of contact $(p+q-1)$--surgery on $L$ such that doing Legendrian surgery on this link yields the result of contact $(r)$--surgery on $L$.

Moving to the $(2,2n+1)$--torus knot $K_n$, recall that it was shown in \cite{OwensStrle12} that the result of smooth $r$--surgery on $K_n$ does not admit a symplectically fillable contact structure for $r \in [2n-1, 4n)$, so contact $(r)$--surgery on the maximal Thurston--Bennequin invariant representative $L_n$ of $K_n$ with $tb(L_n) = 2n-1$ is not symplectically fillable for $r < 2n+1$.  So to complete the proof of the proposition we need to see that the contact manifold $(M_n,\xi_n)$ coming from contact $(2n+1)$--surgery on $L_n$ is Stein fillable, and the result for $r > 2n+1$ will follow as in the preceding paragraph.  (In \cite{LiscaStipsicz04, OwensStrle12}, it is also shown that the result of smooth $4n$--surgery on $K_n$ admits Stein fillable contact structures, but they are not necessarily the ones coming from contact $(2n+1)$--surgery on $L_n$.)

It is known that $M_1$ has exactly $3$ tight contact structures: two are Stein fillable and one is not symplectically fillable, see Corollary~4.11 and Theorem 4.13 in \cite{GhigginiLiscaStipsicz07}.
 The fillable contact structures have the three-dimensional homotopy invariant $d_3=0$, whereas the non-fillable one has $d_3=-\frac{1}{4}$, ~\cite[Proposition~$4.2$]{GhigginiLiscaStipsicz07}. A simple calculation (see \cite{DingGeigesStipsicz04}) shows that the contact structure $\xi_1$ coming from contact $(3)$--surgery has $d_3(\xi_1)=0$. Thus, $(M_1,\xi_1)$ is clearly Stein fillable.

On the other hand, the manifold $M_{n}$ for $n>1$ admits exactly $4$ tight contact structures, only two of which are known to be Stein fillable \cite[Theorem~$1.1$]{GhigginiLiscaStipsicz07}. In what follows, by using the $d_3$ invariant, we prove that $(M_n,\xi_n)$ is isotopic to one of the Stein fillable contact structures on $M_n$.  To this end, we first briefly recall the construction of all the tight contact structures on $M_n$ from \cite[Section~$4$]{GhigginiLiscaStipsicz07}. One starts with the manifold $M_1$, see Figure~\ref{small}. 
\begin{figure}[h]
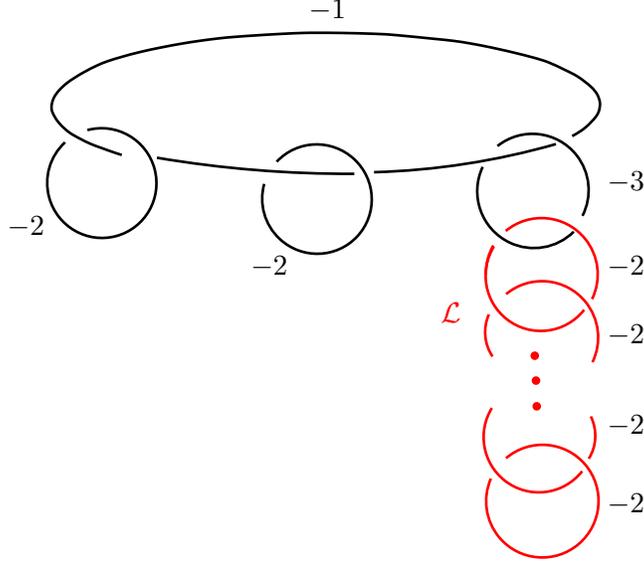

\begin{overpic}
{Cobordism}
\put(-14,123){$-2$}
\put(78,108){$-2$}
\put(213, 140){$-3$}
\put(213, 108){$-2$}
\put(213, 82){$-2$}
\put(213, 48){$-2$}
\put(213, 18){$-2$}
\put(100, 205){$-1$}
\put(150, 90){{\color{red} $\mathcal{L}$}}
\end{overpic}
 \caption{The small Seifert-fibered manifold $M_n=M(-1;\frac{1}{2},\frac{1}{2},\frac{n}{2n+1})$, where $n-1$ is the number of terms in the chain of $(-2)$--framed unknots $\mathcal{L}.$ Ignoring $\mathcal L$, we obtain the manifold $M_1$. This diagram also describes the cobordism $W_n$ given by attaching $2$--handles along the chain of the unknots $\mathcal L$. If $M_1$ is given a contact structure, then $W_n$ can be made Stein by attaching Stein $2$--handles along the unknots realized as Legendrian knots with maximal Thurston--Bennequin number.}
  \label{small}
\end{figure} 
As mentioned above, on $M_1$ there are exactly three tight contact structures, up to isotopy. Two of these, $\xi'_1, \xi'_2$, are Stein fillable, whereas the third $\Xi$ is not fillable (see~\cite[Figures~$6$, $8$]{GhigginiLiscaStipsicz07} for their explicit contact surgery diagrams). One then performs Legendrian surgery on a chain of $n-1$ Legendrian unknots with maximal Thurston--Bennequin number in each of $\xi'_1$, $\xi'_2$, $\Xi$ and $\Xi'$ (here $\Xi'$ denotes the conjugate of $\Xi$, see \cite[Remark~$4.10(2)$]{GhigginiLiscaStipsicz07}) to obtain contact structures $\eta_1$, $\eta_2$, $\theta_1$, and $\theta_2$, respectively. It was shown in \cite[Theorem~$1.1$]{GhigginiLiscaStipsicz07} that these contact structure constitute the complete list of tight contact structures on $M_n$ for any $n>1$. Note that $\eta_1$ and $\eta_2$ are Stein fillable, as Legendrian surgery preserves Stein fillability. It is unclear, however, if either of $\theta_1$ or $\theta_2$ is fillable. 
 
Below we will compute that
\begin{align*}
d_3(\eta_1)&=d_3(\eta_2)=\frac{n-1}{4},\\
 d_3(\theta_1)&=d_3(\theta_2)=\frac{n-2}{4}, \text{ and}\\
 d_3(\xi_n)&=\frac{n-1}{4}.
\end{align*}
Since $\xi_n$ is a tight contact structure on $M_n$, by \cite{LiscaStipsicz04}, and $\eta_1$, $\eta_2$, $\theta_1$, and $\theta_2$ are the only tight structures on $M_n$, up to isotopy, $\xi_n$ must be isotopic to one of $\eta_1$ and $\eta_2$. In particular, $(M_n,\xi)$ is Stein fillable. 

We start by determining $d_3(\xi_n)$. Recall that $\xi_n$ is the contact structure on $M_n$ obtained by contact $(2n+1)$--surgery on the Legendrian $(2,2n+1)$--torus knot $L_n$ with $tb(L_n)=2n-1$. As discussed in Section~\ref{surgery}, contact $(2n+1)$--surgery on $K$ is effected by taking a Legendrian push-off $L_n'$ of $L_n$, stabilizing it once negatively, and taking $2n-1$ Legendrian push-offs of $L_n'$ (for a total of $2n$ copies of $L_n'$). Now perform contact $(+1)$--surgery on $L_n$ and contact $(-1)$--surgery on all the copies of $L_n'$. Notice that each copy of $L_n'$ has rotation number $-1$. If $X_n$ is the cobordism from $S^3$ to $M_n$ obtained by attaching 2--handles to the Legendrian knots with the above prescribed framings, then \cite{DingGeigesStipsicz04} gives the formula 
\[
d_3(\xi_n)=\frac 14 (c^2-3\sigma(X_n)-2\chi(X_n))+1
\]
for the $d_3$ invariant, where $c\in H^2(X,\partial X)$ evaluates on the co-core of a 2--handle to be the rotation number of the corresponding Legendrian knot, $\sigma(X_n)$ is the signature of $X_n$, and $\chi(X_n)$ is the Euler characteristic. One may compute $c=-n$, $\sigma(X_n)=1-2n$, and $\chi(X_n)=2n+1$. So $d_3(x_n)=\frac{n-1}4$ as claimed.

Turning now to the other contact structures, we first recall that by ~\cite[Proposition~$4.2$]{GhigginiLiscaStipsicz07} we have $d_3(\xi'_1)=d_3(\xi'_2)=0$ and $d_3(\Xi)=d_3(\Xi')=-\frac{1}{4}$. Let $W_n$ denote the Stein cobordism between $(M_1,\xi'_1)$ and $(M_n,\eta_1)$. Since $M_1$ is a rational homology sphere, it is easy to see the $d_3$ invariant changes by the value $\frac 14 (c^2-3\sigma(W_n)-2\chi(W_n))$, and so we compute that
\[
d_3(\eta_1)=d_3(\xi'_1) + \frac 14 (0-3(-n+1) - 2(n-1))=\frac{n-1}4.
\]
Similar calculations for the other contact structures show that $d_3(\eta_2)=\frac{n-1}{4}$ and $d_3(\theta_1)=d_3(\theta_2)=\frac{n-2}{4}$.
\end{proof}

We now turn to the obstructions for symplectic fillability of positive contact surgery.

\begin{proof}[Proof of Theorem~\ref{blowupdisk}]
We first assume that $r = n$ is an integer.  Let $L \subset (S^3, \xi_{std})$ be a Legendrian knot, and $n > 0$ an integer such that contact $(r)$--surgery on $L$ is symplectically filled by $(X, \omega)$.  Consider a neighborhood $N$ of $L$ with convex boundary, and choose a framing on $L$ such that the dividing curves on $\partial N$ have slope $0$.  Then, as in the proof of Theorem~\ref{thm1}, we have a solid torus $N'$ in $\partial X$ with convex boundary, and using a basis for $\partial N'$ induced from the basis on $\partial N$ to compute slopes, we see that $N'$ is a solid torus that has a convex boundary with dividing curves of slope $0$ and meridional slope $n$.  There is a transverse knot $T$ isotopic to the core of $N'$, and any slope clockwise of $n$ and counterclockwise of $0$ can be realized as the characteristic foliation on a standard neighborhood of $T$.  In particular, the $\infty$ slope is an admissible slope, and we can perform admissible transverse surgery on $T$ with slope $\infty$ to arrive back at $S^3$.

We can assume that $X$ has a strongly convex boundary: if the smooth surgery coefficient $tb(L) + n$ is non-zero, then we can purturb the filling to have strongly convex boundary.  If not, we replace $n$ by $n+1$: we first find a Legendrian knot $L'$ in $N'$ such that Legendrian surgery on $L'$ gives the contact manifold resulting from contact $(n+1)$--surgery on $L$, by Theorem~\ref{BEtheorem}.  We can attach a Weinstein $2$--handle to $X$ along $L'$ to construct a weak filling of contact $(n+1)$--surgery on $L$, by \cite{EtnyreHonda02b}, and since $tb(L) + n + 1 \neq 0$, this filling can be purturbed to have strongly convex boundary.

Now since we assume that the filling has convex boundary, by Theorem~\ref{gaythm}, we can effect the admissible transverse surgery by attaching a symplectic $2$--handle to $T \subset \partial X$, and construct a weak symplectic filling $(X', \omega')$ of $S^3$.  By \cite{Gromov85, McDuff90}, we know that $(X', \omega')$ must be $B^4$ with its standard symplectic structure blown-up some number of times.  Since by \cite[Theorem~5]{Wendl2013}, the co-core of the 2--handle is a symplectic disk with boundary the transverse push-off of $L$, we see that the transverse push-off of $L$ bounds a disk in $B^4 \# m\overline{\C P}^2$, for some $m$.

If $r > 0$ is a rational number, let $n$ be the smallest integer such that $n \geq r$.  Then, we reduce the rational case to the integer case by noticing that the argument above shows that if contact $(r)$--surgery on $L$ is symplectically fillable, then the result of contact $(n)$--surgery on $L$ is likewise symplectically fillable.
\end{proof}

\begin{proof}[Proof of Theorem~\ref{taubound}]
Continuing with the discussion from the previous proof, consider a smooth knot $K$ that bounds a disk $D$ in $X = B^4 \# m\overline{\C P}^2$.  Choose a basis $\{e_i\}$ for $H_2(X, \partial X)\cong H_2(X)$ such that $e_i \cdot e_j = -\delta_{ij}$.  If $[D] = \sum d_ie_i$, then we claim that removing a neighborhood of $D$ results in a 4--manifold whose boundary is the result of smooth surgery on $K$ with surgery coefficient $\sum d_i^2$.  To see this, note if $\Sigma$ is a Seifert surface for $\partial D$ then $D\cup -\Sigma$ represents the homology class $[D]$ in $H_2(X)\cong H_2(X,\partial X)$, thus $[D\cup -\Sigma]^2=-\sum d_i^2$. So if we take the unique framing of the normal bundle of $D$ in $X$, then it induces a framing on $\partial D$ in $S^3$ for which the push-off of $\partial D$ by the framing intersects $-\Sigma$ exactly $-\sum d_i^2$ times. Thus, the linking of the push-off of $\partial \Sigma$ with $\partial D$ is $\sum d_i^2$. In particular, removing a neighborhood of $D$ from $X$ will affect the boundary by removing a neighborhood of $\partial D$ from $S^3$ and then gluing in a solid torus with meridian going to the longitude with framing $\sum d_i^2$, as claimed.

This observation will allow us to define the function $f$.  According to \cite{OzsvathSzabo03}, if $X$ is a 4--manifold with boundary $S^3$ and $b_2^+(X) = b_1(X) = 0$, and $K \subset S^3$ is a knot that bounds a smooth surface $\Sigma$ in $X$, then $$2\tau(K) + \bigl|[\Sigma]\bigr| + [\Sigma]\cdot[\Sigma] \leq 2g(\Sigma),$$ where if $\{e_i\}$ is an orthonormal basis for $H_2(X, \partial X)$, and $[\Sigma] = \sum d_i e_i$, then $\bigl|[\Sigma]\bigr| = \sum |d_i|$ and $[\Sigma]\cdot[\Sigma] = \sum d_i^2e_i^2$.

In our case, $X = B^4 \# m\overline{\C P}^2$ satisfies the requirements, and $D$ is a surface of genus $g(D) = 0$.  Thus, since $e_i^2 = -1$, we require that $$2\tau(K) \leq \sum\left(d_i^2 - |d_i|\right).$$  We then define $f(\tau)$ to be the minimum of $\sum d_i^2$ over all $m > 0$ and tuples $(d_1, \ldots, d_m) \in \mathbb Z^m$ satisfying the required inequality.  To get the form of $f$ from the introduction, note that the minimum will occur when $d_i \geq 0$ for all $i$ and thus we can leave off the absolute value signs.

Now, contact $(r)$--surgery on a Legendrian knot $L$ results in the same manifold as smooth surgery on $L$ with framing $tb(L) + r$.  Let $n$ be the smallest integer satisfying $n \geq r$; as in the proof of Theorem~\ref{blowupdisk}, if contact $(r)$--surgery is fillable, then so is contact $(n)$--surgery.  Then, by Theorem~\ref{blowupdisk}, we can find a smooth disk $D$ in $B^4 \# m \overline{\C P}^2$ for some $m \geq 0$ giving $\sum d_i^2 = tb(L) + n$.  According to the definition of $f(\tau)$, this cannot happen unless $tb(L) + n \geq f(\tau(L))$.
\end{proof}
\def\cprime{$'$}

\end{document}